\documentclass[reqno,12pt]{amsart}

\usepackage[margin=2.4cm]{geometry}

\usepackage[ansinew]{inputenc}
\usepackage[english]{babel}
\usepackage[all]{xy}
\usepackage{amsmath,latexsym,amssymb,verbatim}
\usepackage{mathrsfs}
\usepackage[export]{adjustbox} 
\usepackage{ifthen}

\usepackage[usenames]{color}
\usepackage{hyperref}

\footnotesep=11.5pt 

\linespread{1.2}

\usepackage{amsfonts}

\usepackage{amsthm}
\usepackage{booktabs}
\usepackage{graphicx}
\usepackage{xcolor}
\usepackage[all]{xy}
\usepackage{hyperref}

\usepackage{mathtools}
\usepackage{subcaption}
\usepackage{faktor}
\usepackage{marginnote}
\usepackage{tikz-cd}
\usepackage{stmaryrd}
\usepackage{centernot} 
\usepackage{enumitem}

\newcommand{\rojo}[1]{\textcolor{red}{#1}}

\newcommand{\face}{{\trianglelefteqslant}}

\definecolor{azuldibujos}{RGB}{179, 230, 230}
\definecolor{contrast2}{HTML}{E50057}
\definecolor{contrast3}{HTML}{FFDD07}
\definecolor{H0}{HTML}{c8dfe3}
\definecolor{H1}{HTML}{5e8b94}

\footnotesep=11.5pt 
\linespread{1.3}

\newcommand\Z{\mathbb{Z}}

\newcommand\img{\operatorname{im}}

 \DeclareMathOperator\coker{coker}

\newcommand{\marginnotes}[1]{\ifthenelse{\isodd{\thepage}}{\normalmarginpar}
{\reversemarginpar}\marginpar{\fbox{\parbox{10mm}{\sloppy\footnotesize \rojo{#1}}}}}

\captionsetup[subfigure]{labelfont=rm}

\newtheorem{theorem}{Theorem}[section]
\newtheorem{proposition}[theorem]{Proposition}
\newtheorem{corollary}[theorem]{Corollary}

\theoremstyle{definition}
\newtheorem{definition}[theorem]{Definition}
\newtheorem{example}[theorem]{Example}
\AtBeginEnvironment{example}{
  \pushQED{\qed}
}
\AtEndEnvironment{example}{\popQED\endexample}
\newtheorem{remark}[theorem]{Remark}

\begin{document}

\title[]{From Persistence to Resilience: New Betti Numbers for Analyzing Robustness in Simplicial Complex Networks}

\date{\today}

\keywords{simplicial complexes, topological data analysis, persistence, Betti numbers, network robustness, higher order complex networks}

\author[P. Hern\'andez-Garc\'ia]{Pablo Hern\'andez-Garc\'ia}
\author[D. Hern\'andez Serrano]{Daniel Hern\'andez Serrano}
\author[D. S\'anchez G\'omez]{Dar\'io S\'anchez G\'omez}

\address{Departamento de Matem\'aticas and Instituto Universitario de F\'isica Fundamental y Matem\'aticas (IUFFyM), Universidad de Salamanca, Salamanca, Spain}
\email{pablohg.eka@usal.es, dani@usal.es, dario@usal.es}

\begin{abstract}
    We show how persistence can be used to measure the robustness of cohomological cycles in finite simplicial complexes. We introduce two families of invariants: thick Betti numbers, which measure whether connected components and holes are supported by simplices of sufficiently high dimension; and cohesive Betti numbers, which measure the strength of higher-order adjacencies supporting cohomology classes. We then study robustness under simplicial degradation processes by combining an attack parameter with either the thick or the cohesive parameter, and by analyzing the resulting ladder modules through their image, kernel, and cokernel persistence modules. This allows us to distinguish features that remain structurally robust from those that lose thickness or cohesion during the degradation process. Finally, we prove stability results using the second network distance, which provides a theoretical reliability guarantee for the proposed constructions.
\end{abstract}

\maketitle
{\small \tableofcontents}

\section{Introduction}

Network Science \cite{Barabasi_Posfai16} provides a mathematical framework for studying complex systems modeled by graphs. This approach has led to a rich theory of connectivity, centrality, diffusion, community structure and robustness, with applications to social, biological, technological and physical networks. However, many real-world systems involve interactions among groups of agents, which has motivated the development of higher-order network models, where interactions are encoded by hypergraphs, simplicial complexes, cellular complexes or combinatorial complexes~\cite{Battiston_Otros20,Bick_Otros23}. Such structures have been used, among others, in neuroscience \cite{Giusti_Otros15,Reimann_Otros17}, in contagion dynamics~\cite{Barr21,Dani_Juan_Villarroel_Tocino23,Iacopini_Otros19}, in social systems~\cite{Atkin72,Freemam80,Dani_Juan_Dario20}, or topological deep learning~\cite{Haj23,Pa24}.

In Network Science, a system is robust if it preserves its basic functions during random failures or targeted attacks~\cite{Barabasi_Posfai16}. Traditionally, this property is assessed by observing changes in connectivity as nodes or edges are removed. However, when extending the study of robustness to simplicial complexes, the concept of structural damage expands, as degradation may involve the loss of simplices of any dimension. Indeed, topological percolation frameworks have shown that deleting higher-dimensional faces induces critical transitions that do not occur in ordinary graphs~\cite{Bianconi_Ziff18, Zhao_otros22_2, Zhao_Otros22}. Building on this, works such as~\cite{Bobrowski_Skraba20, Bobrowski_Skraba22} have extended percolation from simple connectivity to higher homological dimensions. From a complementary perspective, \cite{Chen_Otros23} pointed out that higher-order structures can actively increase robustness, providing ``synergistic protection'' to their lower-dimensional faces against failures. 

Despite these advances, evaluating higher-order robustness remains challenging with standard topological tools. While simplicial homology identifies global features such as connected components and higher-dimensional holes, Betti numbers are homotopy invariants and thus overlook internal combinatorial organization. For instance, two complexes may have the same number of connected components and holes, while differing in the dimension of the simplices supporting those holes or in the higher-order adjacencies that hold the corresponding cycles together. Thus, a proper assessment of robustness should track the ability of these homological features to survive structural degradation. This dynamic perspective is precisely what Topological Data Analysis offers.

Topological Data Analysis (TDA) uses tools from algebraic topology to study the evolution of homological features along filtrations of spaces. Persistent homology, originating in the works of \cite{Edelsbrunner_Letcher_Zomorodian02} and \cite{Carlsson_Zomorodian05}, formalizes this idea by assigning persistence modules and barcodes to filtered data. In this framework, Betti numbers provide interpretable summaries of the homological information at each stage and its practical impact is supported by efficient algorithms and specialized software \cite{Otter_Otros17,GUDHI,Giotto_tda,Ripser}, which have enabled applications in molecular and computational biology \cite{Kedi_Wei18,Xia_Wei_2014}, neuroscience \cite{Lee_Chung_Kang_Lee14,Petri_Otros14,Sizemore_Otros19}, oncology \cite{Masoomy_Otros21,Ramos_Otros25}, machine learning \cite{Hensel_Otros21,Hofer_17}, image processing \cite{Bleile_otros22,Singh_Otros23} and complex networks \cite{Ferra26, Horak09,Sizemore_Giusti_Bassett17}. These applications motivate studying not only the presence or persistence of homological features, but also how they are supported by the surrounding simplicial structure and how this support changes under degradation.

This is the point of departure of the present work. The perspective emphasized in~\cite{Ghrist08}, according to which ``a hole is a hole no matter how fragile or fine'', captures the topological nature of homological features and lies at the heart of persistent methods. Our viewpoint is complementary: in higher-order networks, fragility and fineness may themselves encode meaningful structural information. Thus, beyond tracking the birth and death of homological features, we study the structural quality of their support inside the simplicial complex.

The aim of this paper is to develop a persistence framework for finite simplicial complexes that refines Betti numbers by taking into account how cohomology classes are supported by the higher-order structure. For instance, a one-dimensional hole surrounded by filled triangles and one formed only by isolated edges may have the same Betti numbers, but they differ in the dimensional and adjacency structure that sustains them, and therefore should not be considered equally robust. This motivates the two complementary aspects studied below. The first one is \emph{thickness}: whether a connected component or a hole is supported by simplices of sufficiently high dimension. The second one is \emph{cohesiveness}: whether the simplices supporting a cohomology class remain strongly connected through higher-order adjacencies. 

Throughout the paper, \emph{robustness} refers to the preservation of topological features under prescribed structural modifications, such as removing simplices, restricting to higher-dimensional supports, or suppressing dimensional strata. By \emph{resilience}, we mean a dynamic persistent notion: the ability of these features to withstand a degradation process while retaining thickness or cohesiveness. Thus, resilience is not understood here as an autonomous adaptive response of the network, but as a persistent measure of its structural ability to resist possible attacks or failures. The invariants introduced below are designed to detect this form of structural robustness and may later inform application-specific adaptation or intervention strategies.

Our first construction, aimed at measuring thickness, is based on coskeleta. For each $q\geq 0$, we consider the subcomplex $X^q$ formed by faces of simplices of dimension at least $q$, obtaining a cofiltration that progressively removes simplices not supported by higher-dimensional cofaces. Taking cohomology leads to the \emph{thick Betti numbers}, which refine classical Betti numbers by recording the dimensional level at which connected components and holes remain visible.

Our second construction measures cohesiveness. Given a set of dimensions $\mathbf{h}$, we consider the subposet of the face poset of $X$ formed by simplices whose dimensions belong to $\mathbf{h}$. This leads to the \emph{cohesive Betti numbers}, which detect how cohomology classes depend on higher-order adjacencies among simplices of prescribed dimensions.

We then use these constructions to study simplicial degradation from a persistence perspective. Given a cofiltration modeling an attack, that is, the progressive removal of simplices, one can combine the attack parameter with either the thickness or cohesiveness parameter, obtaining a biparameter persistence module. Since such modules do not, in general, admit a barcode decomposition analogous to the one-parameter case~\cite{Carlsson_Zomorodian09}, we extract one-parameter information by fixing one structural parameter. This yields a ladder module~\cite{Escolar_Hiraoka16}, that is, a morphism between the persistence module based on the attack parameter and that of the corresponding thick or cohesive refinement. The image, kernel, and cokernel persistence modules~\cite{CohenSteiner_Otros09} allow us to distinguish features
that remain structurally robust from those that lose thickness or cohesion during
the degradation process.

A final issue concerns stability. Persistence invariants are expected to vary in a controlled way under small perturbations of the input data~\cite{Chazal_Otros09,Chazal_Otros16,Memoli_Chowdhury18,Steiner_Edelsbrunner_Harer07}. This property is essential in applications, where data are often affected by noise, measurement errors or incomplete sampling. We identify an appropriate metric framework for stability: while the classical network distance is too coarse to control the refined coskeletal and cohesive information, the second network distance~\cite{Chodhury_Memoli23} yields stability results for networks with the same number of vertices, providing theoretical support for future applications.

The paper is organized as follows. Section~\ref{Sec:Prelim} recalls the necessary background on simplicial complexes, cohomology, poset cohomology and persistence. Section~\ref{Sec:Coskeletal} introduces coskeleta (Definition~\ref{d:coske}) and thick Betti numbers (Definition~\ref{d:thickbetti}), proves their simplicial invariance (Theorem~\ref{t:thickbetti}), and studies their interpretation in degree zero (Proposition~\ref{Prop:Coskeletal0h}) and through coskeletal persistence (Theorem~\ref{Prop:InvarianzaFunctoresCoesq}, Proposition~\ref{Prop:aibiValues} and Corollary~\ref{Cor:StratificationCoskeletal}). Section~\ref{Sec:Stratified} defines cohesive Betti numbers (Definition~\ref{d:cohesivebetti}), proves their simplicial invariance (Theorem~\ref{t:cohesivebetti}), relates them to subcomplexes of the barycentric subdivision through equation~\eqref{e:BS}, and identifies their degree-zero interpretation in terms of higher-order connectivity (Proposition~\ref{Prop:StratifiedCaminos} and Corollary~\ref{cor:StratifiedCaminos}). Section~\ref{Sec:Resilience} applies the preceding constructions to cofiltrations modeling simplicial attacks, using the thick and cohesive ladder modules given by equations~\eqref{e:Thick6pack} and~\eqref{Eq:Diagram6pack} to analyze resilience via image, kernel, and cokernel persistence modules. Section~\ref{Sec:Stability} studies the stability of the proposed constructions for Vietoris--Rips cofiltrations, proving stability for thick and cohesive persistence modules and for their image, kernel, and cokernel modules (Theorems~\ref{Thm:EstabilidadGruesos},~\ref{Thm:EstabilidadPersistenciaGrosor},~\ref{Thm:EstabilidadCohesion} and~\ref{Thm:EstabilidadPersistenciaCohesion}). Finally, Appendix~\ref{app:demos} contains the proofs of several results stated throughout the paper.

\section{Preliminaries}\label{Sec:Prelim}

To start with, we will give a brief overview of some basic notions and facts about simplicial complexes, partially ordered sets (posets), and persistence which, in the sequel, will be useful for the convenience of the reader unfamiliar with these subjects. For a thorough exposition, we refer to~\cite{Hatcher02,Munkres84} for simplicial complexes and to~\cite{ Carlsson20,Chazal_Michel21, Ghrist18} for persistence. 

\subsection{Simplicial complexes and posets}\label{Subsec:SimplicialComplexes}

Given a finite set $V$, by a \emph{simplicial complex} on the ground set $V$ we mean a collection $X$ of non-empty subsets of $V$ closed under inclusion of subsets, that is, if $\sigma\in X$ and $\tau\subseteq \sigma$, then $\tau\in X$. The elements $\sigma$ of $X$ are called \emph{simplices} or \emph{faces} of $X$ and the elements of $\sigma$ are called \emph{vertices}. The vertices of $X$ are the one-point sets of $X$. We shall denote by $V(X)$ the set of vertices of $X$. If $\sigma\subseteq\tau$, we also say that $\sigma$ is a face of $\tau$ and we shall put $\sigma\face \tau$. A maximal face of $X$ (with respect to the inclusion of subsets) is called a \emph{facet}. Then, a simplicial complex is completely defined by the set of its facets. 

A simplex $\sigma\in X$ consisting of $n+1$ vertices is called an \emph{$n$-simplex} and we say that it has dimension $n$. The set of $n$-simplices of a simplicial complex $X$ is denoted by $S^{n}(X)$. The \emph{dimension} of a simplicial complex $X$ is defined as the maximum of the dimensions of its simplices, $\dim\,X= \max\{\dim\,\sigma : \sigma \in X\}$. 

A  \emph{simplicial map} $f\colon X\to Y$ between two simplicial complexes $X$ and $Y$ is a map $f\colon V(X)\to V(Y)$ between its vertices such that, for any 
simplex $\sigma$ in $X$, the subset $f(\sigma)\coloneqq \{f(v)\,\colon\, v\in \sigma\}$ is a simplex in $Y$. Note that injective maps preserve the dimension of the simplices but, in general, this dimension may be smaller. A simplicial map $f\colon X\to Y$ is called a \emph{simplicial isomorphism} if $f\colon V(X)\to V(Y)$ is bijective and its inverse  $f^{-1}$ is a  simplicial map of $Y$ into $X$. 

The \emph{face poset} of a simplicial complex \(X\), denoted by \( P_X \), is defined as the set of simplices of \( X \) equipped with the partial order given by the face relation \( \face \). On the other hand, given a poset $P$ we shall denote by $\mathcal{K}(P)$ its \emph{order complex}, that is, the simplicial complex whose simplices are the totally ordered sets (i.e., chains) of points of $P$. In particular, when the poset is a face poset, the simplicial complex formed by the chains in the face poset is known to be the \emph{barycentric subdivision} of the original simplicial complex $X$. It shall be denoted as $\mathcal{K}(X)$.

From now on, we will always assume that $V(X)$ is a totally ordered set and $n$-simplices will be denoted as $\sigma=({i_0},\dots,{i_n})$, where $i_0<i_1<\cdots <i_n$ are nonnegative integers. 

Given a simplicial complex $X$ and a field $\Bbbk$, the space of $n$-cochains on $X$, $C^n(X;\Bbbk)$, is the $\Bbbk$-vector space spanned by the set of $n$-simplices of $X$. For an $n$-cochain $x\in C^n(X;\Bbbk)$, we denote by $x_\sigma$ its component on the simplex $\sigma\in S^n(X)$. After fixing a total ordering of the vertices of $X$, the $n$-th simplicial coboundary operator $\delta^n\colon C^n(X;\Bbbk)\to C^{n+1}(X;\Bbbk)$ is given, for each $\sigma=(i_0,\dots,i_{n+1})$, by
\[
(\delta^n x)_\sigma=\sum_{j=0}^{n+1}(-1)^j x_{(i_0,\dots,\widehat{i_j},\dots,i_{n+1})},
\]
where $\widehat{i_j}$ means that $i_j$ is omitted. These maps satisfy $\delta^{n+1}\circ\delta^n=0$, and hence define the simplicial cochain complex of $X$. Its $n$-th cohomology space is
\[
H^n(X;\Bbbk)=\ker\,\delta^n/\img\,\delta^{n-1}.
\] 
A straightforward computation shows that these spaces do not depend on the total order fixed on the set of vertices. The dimension of the $n$-th simplicial cohomology space associated to $X$ with coefficients in $\Bbbk$ is known as the \emph{$n$-th Betti number} of $X$ with coefficients in $\Bbbk$ and denoted by $\beta^n(X;\Bbbk)$.

\begin{remark}\label{rem:simp-coh}
The usual definition of simplicial cohomology~\cite{Gallier_Quaintance22} is based on the cochain spaces and coboundary maps obtained by taking the dual of the chain spaces and boundary maps that define the simplicial homology~\cite{Munkres84}. By finite-dimensional duality, the definition presented here is equivalent to the usual one. On the other hand, Betti numbers of simplicial complexes are usually defined as the dimension of simplicial homology spaces rather than the dimension of simplicial cohomology spaces. However, for finite simplicial complexes, both definitions agree, as follows from the Universal Coefficient Theorem for cohomology~\cite[Proposition 12.9]{Gallier_Quaintance22}.
\end{remark}

Every simplicial map $f\colon X\to Y$ induces linear maps ${f^n}\colon C^n(Y;\Bbbk)\to C^n(X;\Bbbk)$ satisfying ${f^{n+1}}\circ \delta^n=\delta^n\circ {f^n}$ (see Appendix~\ref{app:demos}), so they induce morphisms between cohomology spaces
\[
H^n(f)\colon  H^n(Y;\Bbbk)\longrightarrow  H^n(X;\Bbbk)\,.
\]
These assignments are functorial and, as a consequence, Betti numbers are simplicial invariants. Explicitly:

\begin{proposition}\label{Prop:BettiInvariantes}
If $X$ and $Y$ are isomorphic finite simplicial complexes, then $\beta^n(X;\Bbbk)=\beta^n(Y;\Bbbk)$ for every $n\geq 0$. 
\end{proposition}

Recall also that Betti numbers give us topological information about the structure of the simplicial complex: $\beta^0(X;\Bbbk)$ counts the number of connected components of $X$ and, for $n\geq 1$, $\beta^n(X;\Bbbk)$ is usually interpreted as the number of ``$n$-dimensional holes'' in the simplicial complex.

\begin{definition}
The $n$-th cohomology space of a poset $P$ with coefficients in $\Bbbk$ is defined as the simplicial cohomology of its order complex, that is,
$$H^n(P;\Bbbk)\coloneqq H^n(\mathcal{K}(P); \Bbbk)\,.$$
\end{definition}

Any monotone map $f\colon P\to Q$ between posets induces a simplicial map between the corresponding order complexes $f\colon \mathcal{K}(P)\to\mathcal{K}(Q)$. Then, $f$ also induces (in a functorial way) linear maps in cohomology $H^n(f)\colon  H^n(Q;\Bbbk)\longrightarrow  H^n(P;\Bbbk)$.

When the poset is a face poset, its order complex is exactly the barycentric subdivision of the original simplicial complex. Consequently, as follows from~\cite[Theorem~17.2]{Munkres84} and Remark~\ref{rem:simp-coh}, both structures yield isomorphic cohomology spaces.
\begin{theorem}
    The $n$-th simplicial cohomology with coefficients in $\Bbbk$ of a simplicial complex $X$ is isomorphic to the $n$-th poset cohomology with coefficients in $\Bbbk$ of its face poset $P_X$:
    \begin{equation*}
        H^n(X;\Bbbk)\simeq H^n(P_X;\Bbbk).
    \end{equation*}
\end{theorem}

\begin{corollary}\label{Cor:BettiNumbersSheafCohomology}
    The Betti numbers of a simplicial complex $X$ can be computed via poset cohomology as
    \begin{equation*}
        \beta^{n}(X;\Bbbk)=\dim H^n(P_X;\Bbbk).
    \end{equation*}
\end{corollary}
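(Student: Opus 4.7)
The statement is an immediate corollary of the theorem that precedes it, so the plan is essentially a one-line chain of equalities. I would start by invoking the definition of the $n$-th Betti number given earlier in Subsection~\ref{Subsec:SimplicialComplexes}, namely $\beta^{n}(X;\Bbbk) = \dim H^{n}(X;\Bbbk)$. Then I would apply the preceding theorem, which supplies an isomorphism of $\Bbbk$-vector spaces $H^{n}(X;\Bbbk) \simeq H^{n}(P_X;\Bbbk)$, and conclude by noting that dimension is a linear isomorphism invariant.

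More concretely, the single displayed line of the proof would read
\begin{equation*}
\beta^{n}(X;\Bbbk) \;=\; \dim H^{n}(X;\Bbbk) \;=\; \dim H^{n}(P_X;\Bbbk),
\end{equation*}
where the first equality is the definition of Betti numbers and the second is obtained by taking dimensions on both sides of the isomorphism furnished by the preceding theorem.

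There is no real obstacle here: all the mathematical content lives in the preceding theorem (which identifies simplicial cohomology with sheaf cohomology on the face poset via \v{C}ech cohomology of the constant sheaf, by invoking \cite[Theorem 13.18]{Gallier_Quaintance22}). The corollary is a purely formal dimensional consequence, included to make explicit the reformulation that will be used later when the cohesive Betti numbers are defined in Section~\ref{Sec:Stratified} via sheaf-theoretic methods over finite posets.
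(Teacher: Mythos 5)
Your proposal is correct and matches the paper's (implicit) reasoning exactly: the corollary follows at once by combining the definition $\beta^{n}(X;\Bbbk)=\dim H^{n}(X;\Bbbk)$ with the isomorphism $H^{n}(X;\Bbbk)\simeq H^{n}(P_X;\Bbbk)$ of the preceding theorem and taking dimensions. Nothing further is needed.
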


Geometrically, a simplicial complex can be understood by gluing simplices together by means of their faces. Hence, we can consider different degrees of adjacency between the simplices. Here, we focus on the notion of lower adjacencies introduced in~\cite{Dani_Juan_Dario20}. 
\begin{definition}
Let $\tau$ and $\tau'$ be simplices of a simplicial complex $X$. We say that $\tau$ and $\tau'$ are \emph{$h$-lower adjacent} if there exist a $h$-simplex $\sigma$ such that $\sigma\face \tau$ and $\sigma\face \tau'$. In this case, we write $\tau\sim_{L_h} \tau'$.
\end{definition}

\begin{definition}
Let $X$ be a simplicial complex and let $0\leq h_0< h_1$. 
\begin{itemize}
    \item An \emph{$(h_0,h_1)$-walk} is a sequence $\{\tau_1,\tau_2,\ldots,\tau_{r}\}$ of $h_0$-lower adjacent simplices such that $h_1=\min\{\dim\tau_i\}$, that is,
    \begin{equation*}
    \tau_1\sim_{L_{h_0}}\tau_2\sim_{L_{h_0}}\cdots\sim_{L_{h_0}} \tau_r \, ,    
           \quad h_1=\min\,\dim \,\tau_i.
    \end{equation*}
    We say that a walk is closed if $\tau_1=\tau_r$.
    
    \item For a pair of simplices $\sigma$ and $\sigma'$ we define 
    \begin{equation}\label{Eq:HigherOrderAdjacency}
    \sigma\sim_{h_0h_1}\sigma' \iff \exists\quad  (h_0,h_1)\text{-walk }\, \{\tau_1,\tau_2,\ldots,\tau_r\}  : \sigma\face \tau_1\, \text{ and }\,  \sigma'\face \tau_r\,.
    \end{equation}
    In addition, we set that $\sigma\sim_{h_0h_1}\sigma$ for each $\sigma\in X$. 

    \item We call \emph{$(h_0,h_1)$-connected components} of $X$ the equivalence classes of the quotient set ${S^{h_0}(X)}/{\sim_{h_0h_1}}$. 
\end{itemize}
\end{definition}

\begin{example}
   Figure~\ref{Fig:Componentes2Conexas} shows a connected simplicial complex with three $(0,2)$-connected components (marked in red). The $(0,2)$-connected components measure the connectivity of the vertices through walks of adjacent triangles. 
\end{example}
\begin{figure}[!htb]
\centering
\includegraphics[scale=0.6]{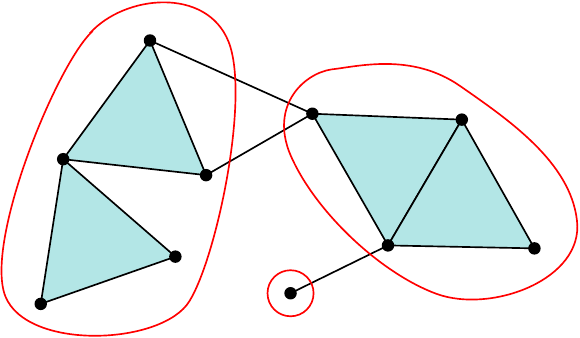}
\caption{$(0,2)$-connected components}\label{Fig:Componentes2Conexas}
\end{figure}

\begin{remark}\label{rem:qconnectivity}
This connectivity is closely related to the ``$q$-connectivity'' considered, for instance, in~\cite{Atkin72,  CR24,Dani_Dario20, Maletic_Rajkovic12, Riihimaki23}. For example, the set of $q$-connected components defined in~\cite{Maletic_Rajkovic12} is in bijection with the set of $(q,q+1)$-connected components whose equivalence classes contain more than one element. In particular, the $(0,1)$-connected components are the usual connected components of the simplicial complex. 
\end{remark}

\subsection{Persistence} \label{Subsec:Persistence} \quad

The idea behind persistent homology is to track the topological features of simplicial complexes along a filtration by means of homology. It is primarily applied to study the shape of point clouds~\cite{ Carlsson20,Chazal_Michel21, Ghrist18}, but is not limited to this setting. In Network Science, persistent homology has been used to analyze complex and weighted networks through graph filtrations, clique complexes, and mesoscale homological features~\cite{Bergomi_Ferri_Zuffi20,Ferra26,Horak09,Petri_Otros14,Sizemore_Giusti_Bassett17}. It has also been developed in other mathematical contexts, including geometry and analysis~\cite{Polterovich_Rosen_Samvelyan_Zhang20} and persistent local systems over manifolds~\cite{MacPherson_Patel21}. In this paper, persistence is used in this general sense: as a way of organizing the evolution of cohomological information along structured families of spaces. Since our constructions are based on simplicial removal processes, we introduce persistence from a dual point of view, using cofiltrations and simplicial cohomology.

Let $P$ be a poset and let $\{X^p\}_{p\in P}$ be a \emph{cofiltration} of simplicial complexes indexed by $P$, that is, a family of simplicial complexes such that $X^p\supseteq X^q$ whenever $p\leq q$. As we have mentioned earlier, the aim is to follow the changes in the topology of the simplicial complexes along the cofiltration. This can be performed by studying the vector spaces $H^n(X^p;\Bbbk)$ and the linear maps $\varphi_{pq}\colon H^n(X^p;\Bbbk)\to H^n(X^q;\Bbbk)$ induced by the inclusions $X^p\supseteq X^q$. The resulting object is known as a \emph{persistence module}, a notion which appears in~\cite{Carlsson_Otros04} motivated by the study of the homology of a filtration.
 
Let us recall that a \emph{persistence module} $\mathcal{M}$ indexed by a poset $P$ consists of a family of $\Bbbk$-vector spaces $\{M_p\}_{p\in P}$ and a family of linear maps $\{\varphi_{pq}\colon \mathcal{M}_p\to \mathcal{M}_q\}_{p\leq q}$, such that $\varphi_{pp}=\mathrm{Id}_{\mathcal{M}_p}$ for all $p\in P$, and $\varphi_{qr}\circ \varphi_{pq}=\varphi_{pr}$ for all $p\leq q \leq r$. 
We say that the persistence module $\mathcal{M}$ is \emph{pointwise finite-dimensional} (abbreviated as \emph{p.f.d.}) if all the vector spaces $\mathcal{M}_p$ are finite-dimensional. A \emph{morphism of persistence modules} $\Phi\colon \mathcal{M}\to \mathcal{N}$ is a set of linear maps $\{\Phi_p\colon \mathcal{M}_p\to \mathcal{N}_p\}_{p\in P}$ 
that satisfy, for each $p\leq q$, $\varphi'_{pq}\circ\Phi_p=\Phi_q\circ\varphi_{pq}$. If $\Phi_p$ is bijective for all $p\in P$, then $\Phi$ is an isomorphism.

Persistence modules indexed by $\mathbb{N}^r$ are called $r$-\emph{parameter} (discrete) persistence modules. In the one-parameter case, a fundamental class of persistence modules consists of the so-called \emph{interval persistence modules}. Given a totally ordered set $T$ and an interval $I\subseteq T$, that is, a subset such that if  $t\leq s\leq r$ and $t,r\in I$, then $s\in I$ as well, the \emph{interval persistence module} $\Bbbk[I]$ associated to $I$ is defined as
\begin{equation*}
\Bbbk[I]_t=\begin{cases}\Bbbk & \text{ if } t\in I ,\\
0 & \text{ otherwise,}
\end{cases}
\ \ \ \text{ and } \ \ \ 
\varphi_{ts}=\begin{cases}
\mathrm{Id}_\Bbbk & \text{ if } t,s\in I, \\
0 & \text{ otherwise.}
\end{cases}
\end{equation*}
 
The representation theorem for one-parameter discrete persistence modules~\cite{Carlsson_Zomorodian05} shows that interval persistence modules are the basic building blocks to obtain other persistence modules. A generalization of this structure theorem was given by~\cite{Crawley-Boevey15} as follows.
\begin{theorem}\label{Thm:Descomposicion} 
Every pointwise finite-dimensional persistence module $\mathcal{M}$ indexed by a totally ordered set decomposes in a unique way (up to reordering) as a direct sum of interval modules, $
\mathcal{M}\simeq \displaystyle\bigoplus_{\lambda\in \Lambda}\Bbbk[I_\lambda]
$.
\end{theorem}
The collection of intervals $\{I_\lambda\}_{\lambda\in \Lambda}$ for which $\mathcal{M}\simeq \bigoplus_{\lambda\in \Lambda}\Bbbk[I_\lambda]$ is known as the \emph{persistence barcode} of $\mathcal{M}$. From a computational standpoint, this theoretical construct is highly accessible, with several software implementations available to extract persistence barcodes from data (see~\cite{Otter_Otros17}). 

\begin{remark}
Notice that the total ordering of the index set is essential; for example, in general, for persistence modules indexed by $\mathbb{N}\times\mathbb{N}$ there is no analogous decomposition result.  
\end{remark}

\begin{example}
Consider a finite one-parameter cofiltration of simplicial complexes    
\begin{equation*}
        X^0\supseteq X^1\supseteq X^2\supseteq \dots \supseteq X^M,
    \end{equation*}
arising, for instance, from the successive removal of simplices from an initial simplicial complex $X^0$, which models an arbitrary degenerative process. Taking the $n$-th simplicial cohomology one gets a persistence module 
    \begin{equation*}
        H^n(X^\bullet;\Bbbk)\equiv H^n(X^0;\Bbbk)\to H^n(X^1;\Bbbk)\to \dots \to H^n(X^M;\Bbbk).
    \end{equation*}    
    By Theorem~\ref{Thm:Descomposicion}, it decomposes as a direct sum of intervals of the finite set $\{0,1,\dots, M\}$, 
    \begin{equation*}
        H^n(X^\bullet;\Bbbk)\simeq \bigoplus_{i=1}^m\Bbbk[b_i,d_i) \quad \text{for some }0\leq b_i<d_i\leq M+1.
    \end{equation*}
    The persistence barcode, $\{[b_i,d_i)\}_{i=1}^m$, is represented as a set of bars over the real line. These bars visually encode the behaviour of the topological features along the cofiltration  (Figure~\ref{Fig:FiltracionDiscretaCodigoBarras}). Each bar corresponds to an interval during which a cohomology class persists, with $b_i$ and $d_i$ indicating its birth and death, respectively. In particular, long bars correspond to cohomology classes with high persistence, which are therefore more robust to the deletion of simplices from the initial simplicial complex $X^0$. 
\end{example}

\begin{example}
Figure~\ref{Fig:FiltracionDiscretaCodigoBarras} provides an example of a cofiltration indexed by the ordered set $\{0,1,2,3,4,5\}$ and the barcodes of the persistence modules associated with the $0$-th and $1$-st simplicial cohomology with coefficients in $\Z_2$. The $H^0(X^\bullet;\Z_2)$ barcode tracks the connected components of the simplicial complexes. A bar starts at $0$ and reflects the connectivity of the initial simplicial complex. The connectivity is not affected by the first two removals, but it is at step $3$, where one of the vertices is disconnected from the rest, so a new bar appears. This bar persists until step $5$, when the vertex on the right is removed. Also in step $5$, two connected components are born due to the elimination of the remaining edges. As for the $1$-dimensional holes captured by $H^1(X^\bullet;\Z_2)$, there are two bars in the corresponding barcode. Both bars start at step $1$, when two holes are ``born'' due to the removal of the interior of the triangles from the initial simplicial complex. One of them ``dies'' immediately in step $2$ but the other persists until step $4$, as shown by the top bar.
\end{example}

\begin{figure}[!htb]
       \centering
       \includegraphics[width=0.85\linewidth]{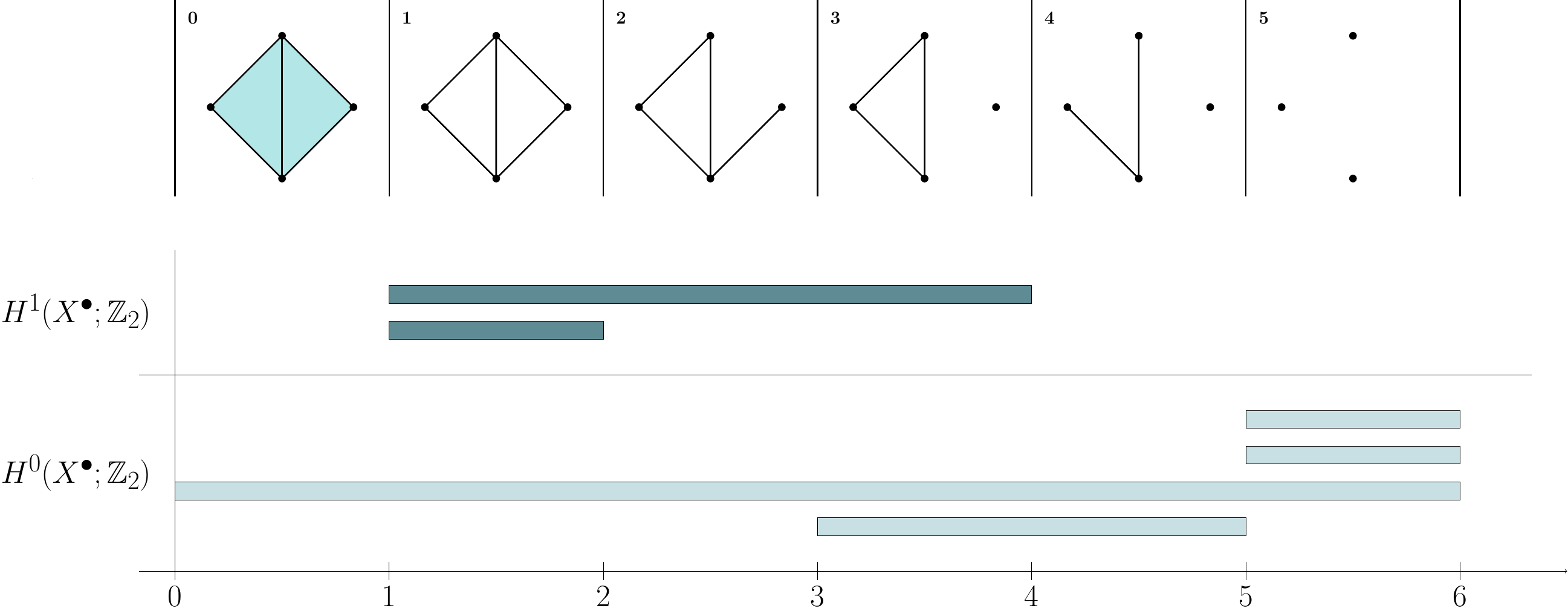}
\caption{Cofiltration of a simplicial complex and the barcodes of the persistence modules associated with the $0$-th and $1$-st simplicial cohomology with coefficients in $\Z_2$}\label{Fig:FiltracionDiscretaCodigoBarras}
\end{figure}

Finally, let us mention that unlike the 1-parameter case, the analysis of multi-parameter persistence modules presents several challenges~\cite{Botnan_Lesnick23}. A fundamental difficulty is that a complete discrete invariant, such as the barcode, does not generally exist in the multidimensional setting~\cite{Carlsson_Zomorodian09}. To address this, several invariants have been proposed in TDA for multiparameter persistence, such as the Hilbert function, the rank invariant, or the bigraded Betti numbers. Moreover, persistence barcodes may be obtained from a biparameter persistence module by restricting it to persistence modules defined by totally ordered subsets of $\mathbb{N}\times \mathbb{N}$, such as its vertical or horizontal components. To facilitate the computation and visualization of these invariants, software tools such as RIVET~\cite{RIVET, Lesnick_Wright15} have been developed.

\section{How thick is a hole?}\label{Sec:Coskeletal}

In this section, we will use coskeletons and persistence to introduce a collection of new invariants associated to (the category of)  simplicial complexes: the \emph{thick Betti numbers}. As we shall see, these invariants reveal structural properties of the cohomological cycles of a simplicial complex related to the dimension of their constituting simplices, providing us a way to quantify the robustness of a simplicial complex in terms of the thickness of its connected components and holes.

\subsection{Thick Betti numbers}

\begin{definition}\label{d:coske}
Given a simplicial complex $X$ and $q\in \Z_{\geq 0}$, the \emph{$q$-coskeleton} of $X$ is defined as 
    \begin{equation*}
        X^q=\{\sigma\in X : \exists\, \tau\in X \, \text{ such that }\, \sigma\face\tau \, \text{ and } \dim \tau\geq q\}\,.
    \end{equation*}
\end{definition}

The $q$-coskeleton is a  simplicial subcomplex of $X$ and considers the simplices of dimension greater than or equal to $q$ together with all their faces (Figure~\ref{Fig:EjemploCoskeletons}). In other words, the $1$-coskeleton removes all vertices that do not belong to any edge, the $2$-coskeleton removes all vertices and edges that are not part of any filled triangle, and so on. Thus, as $q$ increases, the coskeleta progressively isolate the thicker part of the complex, namely the topology supported by higher-dimensional simplices. The comparison with $X$ reveals whether the removal of smaller facets changes the topology in a significant way.

\begin{figure}[!htb]
\centering
\begin{subfigure}{0.47\textwidth}
\centering
    \includegraphics[scale=0.3]{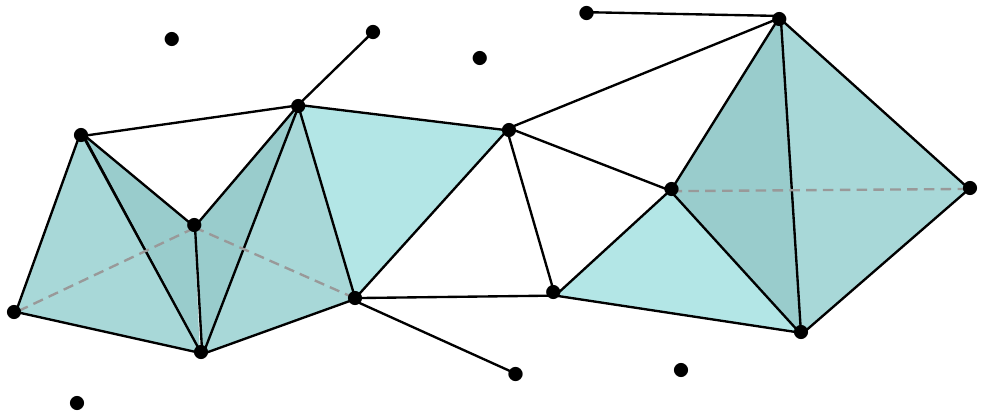}
    \caption{Simplicial complex $X$}\label{Fig:Ejemplo3PXh}
\end{subfigure}
\hfill
\begin{subfigure}{0.47\textwidth}
\centering
    \includegraphics[scale=0.3]{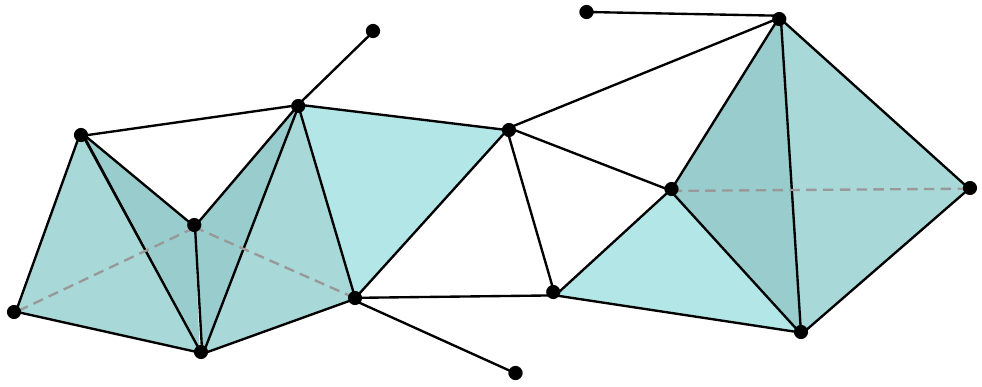}
    \caption{$1$-coskeleton $X^1$}\label{Fig:Ejemplo3PX1}
\end{subfigure}

\vspace{1.5em} 

\begin{subfigure}{0.47\textwidth}
\centering
    \includegraphics[scale=0.3]{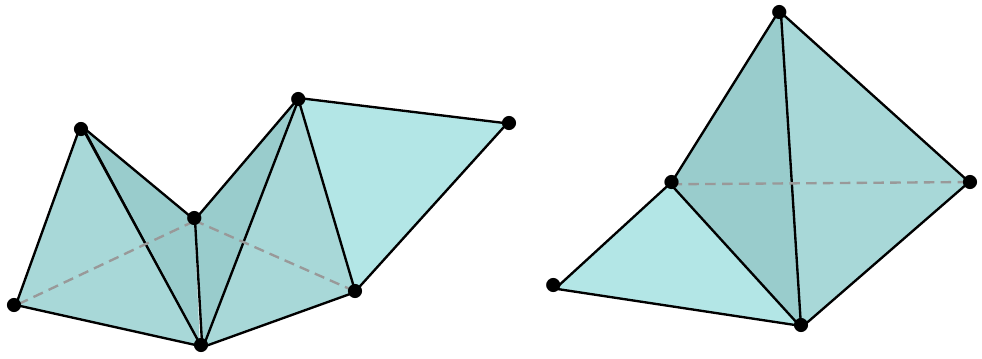}
    \caption{$2$-coskeleton $X^2$}\label{Fig:Ejemplo3PX2}
\end{subfigure}
\hfill 
\begin{subfigure}{0.47\textwidth}
\centering
    \includegraphics[scale=0.3]{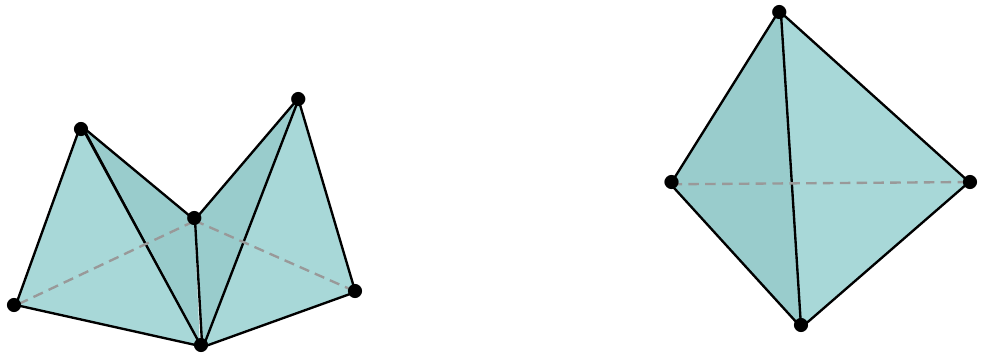}
    \caption{$3$-coskeleton $X^3$}\label{Fig:Ejemplo3PX3}
\end{subfigure}
\caption{We can recursively obtain the $q$-coskeleton by eliminating from the $(q-1)$-coskeleton the simplices that are not a face of a $q$-simplex}\label{Fig:EjemploCoskeletons}
\end{figure}
\begin{definition}\label{d:thickbetti}
The \emph{$(n,q)$-th thick Betti number} of a simplicial complex $X$ with coefficients in $\Bbbk$ is the dimension of the $n$-th simplicial cohomology space with coefficients in $\Bbbk$ of the $q$-coskeleton of $X$, that is,
\begin{equation*}
\beta^{n,q}(X;\Bbbk)=\dim H^n(X^q;\Bbbk)\,.
\end{equation*}
\end{definition}
The  $(n,q)$-th thick Betti number can be interpreted as the number of $n$-dimensional cycles which are surrounded by simplices of dimension at least $q$. In particular, $\beta^{n,0}(X;\Bbbk)=\beta^n(X;\Bbbk)$ for all $n\geq 0$, so the set of thick Betti numbers extends the set of Betti numbers.

Note that every simplicial isomorphism $f\colon X \stackrel{\sim}\to Y$ yields, by restriction, an isomorphism between the \mbox{$q$-coskeletons}, $f^q\colon X^q\stackrel{\sim}\to Y^q$. Thus, by  Proposition~\ref{Prop:BettiInvariantes}, one gets that $\dim H^n(X^q;\Bbbk)=\dim H^n(Y^q;\Bbbk)$, for all $n\geq 0$. Hence, as with the usual Betti numbers, the thick Betti numbers are simplicial invariants as well.
\begin{theorem}\label{t:thickbetti}
If $X$ and $Y$ are isomorphic simplicial complexes, then $\beta^{n,q}(X;\Bbbk)=\beta^{n,q}(Y;\Bbbk)$ for all $n,q\geq 0$.
\end{theorem}

\begin{example}
In Figure~\ref{Fig:ExCosk23Triang}a, we have a $1$-dimensional hole in $X$ (empty triangle, $\beta^{1}(X;\mathbb{Z}_2)=1$) that is fully enclosed by filled triangles ($2$-simplices). Taking the $2$-coskeleton of $X$ does not alter $X$, and the $(1,2)$-thick Betti number, $\beta^{1,2}(X;\mathbb{Z}_2)=1$, confirms that the hole persists as shown in (b). In (c), the $1$-dimensional hole in $Y$ (empty triangle, $\beta^{1}(Y;\mathbb{Z}_2)=1$) is not completely surrounded by filled triangles, causing the hole to disappear when taking the $2$-coskeleton. This is captured by the thick Betti number, $\beta^{1,2}(Y;\mathbb{Z}_2)=0$, as shown in (d).  Therefore, one can say that the hole in $X$ is thicker than the hole in $Y$, as it is fully enclosed by $2$-simplices.
\end{example}
\begin{figure}[htb!]
\centering
\begin{minipage}{\textwidth}
    \centering
    \begin{minipage}{0.35\textwidth}
        \centering 
        \includegraphics[scale=0.6]{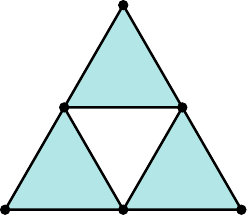}
        \subcaption{$\beta^1(X;\Z_2)=1$}\label{Fig:ExCosk23TriangA}
    \end{minipage}
    \begin{minipage}{0.09\textwidth}
        \centering
        \vspace{-7ex}
        $$\stackrel{q=2}{\scalebox{2.2}[1]{$\longmapsto$}}$$
    \end{minipage}
    \begin{minipage}{0.35\textwidth}
        \centering
        \includegraphics[scale=0.6]{ExCosk3Triang.pdf}
        \subcaption{$\beta^{1,2}(X;\Z_2)=1$}
    \end{minipage}
\end{minipage}

\vspace{1em}

\begin{minipage}{\textwidth}
    \centering
    \begin{minipage}{0.35\textwidth}
        \centering
        \includegraphics[scale=0.6]{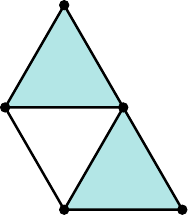}
        \subcaption{$\beta^1(Y;\Z_2)=1$}
    \end{minipage}
    \begin{minipage}{0.09\textwidth}
        \centering
        \vspace{-7ex}
        $$\stackrel{q=2}{\scalebox{2.2}[1]{$\longmapsto$}}$$
    \end{minipage}
    \begin{minipage}{0.35\textwidth}
        \centering 
        \includegraphics[scale=0.6]{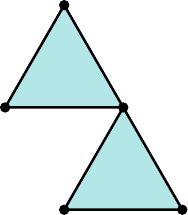}
        \subcaption{$\beta^{1,2}(Y;\Z_2)=0$}
    \end{minipage}
\end{minipage}
\caption{Thickness of a hole: a $1$-dimensional hole fully enclosed by $2$-simplices in (a) and a hole not surrounded by $2$-simplices in (c). This structural difference is captured by considering $2$-coskeletons: the hole does not change in $X$, figure (b), whereas the hole in $Y$ disappears, figure (d)}
\label{Fig:ExCosk23Triang}
\end{figure}

For a preliminary definition, we will consider an $n$-dimensional cycle to be  \emph{thicker} than other if it is enclosed by higher dimensional simplices. However, as we will explore in the next section, this requires a more formal treatment.

The following proposition shows that the $(0,q)$-th thick Betti number counts the number of connected components which are defined by lower adjacent $q$-dimensional simplices (see Figure~\ref{Fig:0hCoskeletal}). 
\begin{figure}[!htb]
\centering
\begin{subfigure}{0.45\textwidth}
\centering
\includegraphics[scale=0.45]{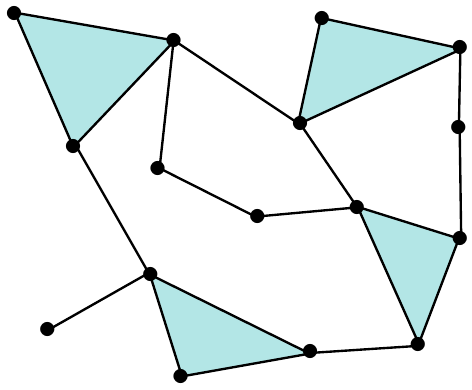}
\caption*{$\beta^{0,2}(X;\Z_2)=4$}
\end{subfigure}
\begin{subfigure}{0.45\textwidth}
\centering
\includegraphics[scale=0.45]{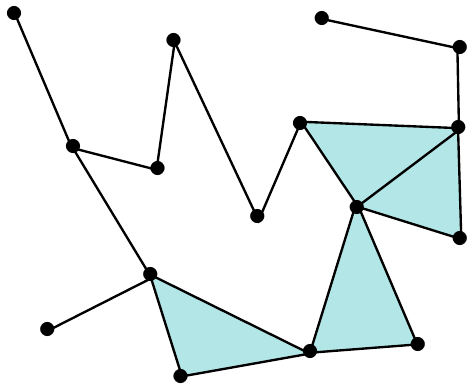}
\caption*{$\beta^{0,2}(Y;\Z_2)=1$}
\end{subfigure}
\caption{Two connected simplicial complexes. The $(0,2)$-th thick Betti numbers measure the connectivity between the triangles}\label{Fig:0hCoskeletal}
\end{figure}
\begin{proposition}\label{Prop:Coskeletal0h}
For any $q>0$ and any field $\Bbbk$, the following formula holds:
    \begin{equation*}
        \beta^{0,q}(X;\Bbbk)=\#\faktor{V(X)}{\sim_{0q}}-\#\{v\in V(X) :  v\notin\sigma \textup{ for all }  \sigma\in S^{q}(X)\}\, ,
    \end{equation*}
or, equivalently, 
\begin{equation*}
\beta^{0,q}(X;\Bbbk)=\#\faktor{S^{\geq q}(X)}{\sim_{0q}}\, ,
\end{equation*}
where $S^{\geq q}(X)$ denotes the set of simplices of $X$ of dimension at least $q$.
\end{proposition}
    \begin{proof}
See Appendix~\ref{proof:Prop-Coskeletal0h} for details.
 \end{proof}
 
\begin{remark}
Similar strategies isolating higher-dimensional structures can be found in the literature. For instance, \cite{ Connon_Faridi_2015,DKM09} study the pure $q$-skeleton of a complex, which is the simplicial subcomplex generated by all $q$-simplices (and thus a subcomplex of our $q$-coskeleton). Along a similar line, \cite{CR24} focus on simplices of dimension greater than or equal to $q$, but shift the framework from simplicial complexes to digraphs to analyze $q$-connectivity (Remark~\ref{rem:qconnectivity}).
\end{remark}

\subsection{Persistence of a hole's thickness}\label{Subsec:ThicknessPersistence}

Now we would like to answer the following question: how robust is a hole under the simplicial elimination rule defined by taking coskeletons?

We mentioned earlier that a hole could be considered thicker than another if it was enclosed by higher-dimensional simplices. However, as illustrated in Figure~\ref{Fig:ExCosk23Triang}, thick Betti numbers do not provide a static way to verify this but instead require a dynamic perspective. It is through the successive simplicial removals that we can determine whether a ``thick'' hole remains or disappears. This is exactly what the persistence telescope helps us achieve.

The coskeletons of a simplicial complex $X$ give rise to a cofiltration: 
\begin{equation*}
X=X^0\supseteq X^1\supseteq X^2\supseteq \dots \supseteq X^{N}\supseteq X^{N+1}=\emptyset\,,\quad N=\dim X.
\end{equation*}
Taking cohomology in degree $n\geq 0$ in the previous cofiltration, we obtain a sequence of $\Bbbk$-vector spaces and linear maps:
\begin{equation}\label{Eq:SucesionCohomologiaCoesqueletos}
H^n(X^0;\Bbbk)\to H^n(X^1;\Bbbk)\to \dots \to H^n(X^{N-1};\Bbbk)\to H^n(X^N;\Bbbk)\to 0.
\end{equation}
In short, by simplicial cohomology functoriality, the coskeletons induce, for each $n\geq 0$, a persistence module $H^n(X^\bullet;\Bbbk)$ which captures how cohomology classes evolve along the cofiltration. 

In particular, since $X^0=X$, we can analyze the structure of connected components and holes in $X$ by tracking their persistence as successive coskeletons are taken. This provides a robustness measure for the holes in a simplicial complex: the cohomology classes in $H^n(X;\Bbbk)$ with longer persistence correspond to those defined by higher-dimensional facets, meaning they are thicker.
 
\begin{remark}
It is also worth noting that if we intend to rigorously analyze other events occurring throughout the coskeletal cofiltration, such as the appearance of holes or connected components, persistence provides the most appropriate framework for this study, as shown in Figure~\ref{Fig:EjemploCreacionAgujero} (birth of a thick hole) or illustrated in Example~\ref{ex:splittingconncomp} (splitting into new connected components).
\end{remark}

\begin{figure}[!htb]
\centering
    \begin{minipage}{0.4\textwidth}
        \centering
        \includegraphics[scale=0.4]{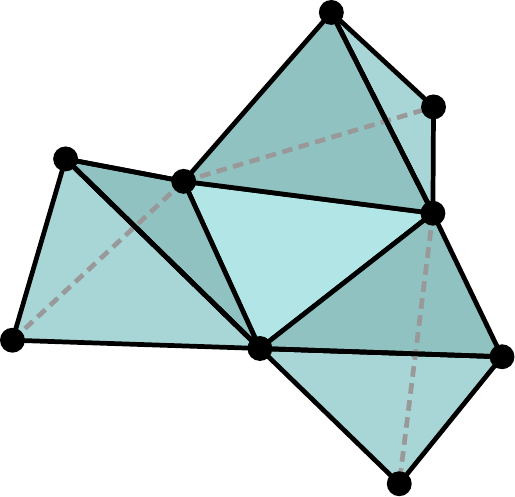}
        \subcaption{$\beta^1(X;\Z_2)=0$}
    \end{minipage}
    \begin{minipage}{0.09\textwidth}
        \centering
        \vspace{-7ex}
        $$\stackrel{q=3}{\scalebox{2.2}[1]{$\longmapsto$}}$$
    \end{minipage}
    \begin{minipage}{0.4\textwidth}
        \centering
        \includegraphics[scale=0.4]{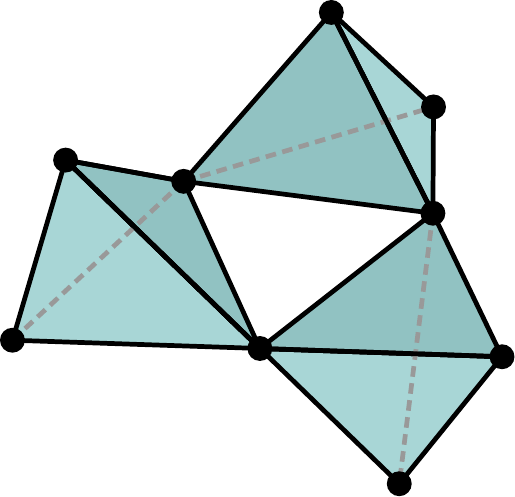}
        \subcaption{$\beta^{1,3}(X;\Z_2)=1$}
    \end{minipage}
\caption{Example of the birth of a hole in the coskeletal cofiltration}\label{Fig:EjemploCreacionAgujero}
\end{figure}

\begin{example}\label{ex:splittingconncomp}
The simplicial complex in Figure~\ref{Fig:RepresentacionCodigoBarras}a has two connected components, the same as its non-trivial coskeletons. However, when we take the $2$-coskeleton (Figure~\ref{Fig:RepresentacionCodigoBarras}b), one connected component disappears and the other one is split into two. So, although the Betti numbers remain constant along the coskeletal cofiltration, there may be structural changes when taking successive coskeletons. In (c), the barcode of the persistence module $H^0(X^\bullet;\Z_2)$ allows us to capture these changes: when $q$ reaches $2$, a bar in the barcode dies (corresponding to the top connected component) and another bar is born (the bottom component splits). 
\end{example}
\begin{figure}[!htb]
\centering
\centering
            \begin{subfigure}{0.3\textwidth}
                \centering
                \includegraphics[width=0.8\linewidth]{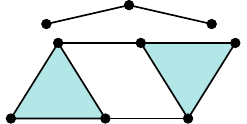}
                \caption{$\beta^{0}(X;\Z_2)=2$}
            \end{subfigure}
            \begin{subfigure}{0.3\textwidth}
                \centering
                \includegraphics[width=0.8\linewidth]{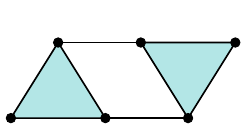}
                \caption{$\beta^{0,2}(X;\Z_2)=2$}
            \end{subfigure}
        \begin{subfigure}{0.35\textwidth}
            \centering
            \begin{tikzpicture}[scale=1]
                \draw (-1.8,0.75) node[right] {\tiny$H^{0}(X^\bullet;\Z_2)$};
                \draw[->] (-0.5,0) -- (3.5,0);
                \draw [-] (0,-0.05)--(0,1.35);
                \draw (3.5,0) node[right] {\small $q$};
                \draw(0,0)node[below]{\small $0$};
                \draw(1,0)node[below]{\small $1$};
                \draw(2,0)node[below]{\small $2$};
                \draw(3,0)node[below]{\small $3$};

                \draw [-] (1,-0.05)-- (1,0.05);
                \draw [-] (2,-0.05)-- (2,0.05);
                \draw [-] (3,-0.05)-- (3,0.05);

                \filldraw[fill=H0, draw=black] (0,0.3) rectangle (2,0.45);
                \filldraw[fill=H0, draw=black] (0,0.6) rectangle (3,0.75);
                \filldraw[fill=H0, draw=black] (2,0.9) rectangle (3,1.05);
            \end{tikzpicture}
            \vspace{-4.3ex}
            \caption{$H^0$ persistence barcode}
 \end{subfigure}
\caption{While the static thick Betti numbers in (a) and (b) indicate the same number of connected components, persistence distinguishes them, as reflected in the barcode of (c)}\label{Fig:RepresentacionCodigoBarras}
\end{figure}

\begin{theorem}\label{Prop:InvarianzaFunctoresCoesq}
Every simplicial isomorphism $X\simeq Y$ induces an isomorphism of persistence modules $H^n(X^\bullet;\Bbbk)\simeq H^n(Y^\bullet;\Bbbk)$ for any $n\geq 0$.
\end{theorem}
\begin{proof}
See Appendix~\ref{proof:Prop-InvarianzaFunctoresCoesq} for details.
\end{proof}

We now analyze the birth and death of thick holes, a phenomenon that is captured by  the decomposition of the persistence module and is visualized through barcodes. Following Theorem~\ref{Thm:Descomposicion}, the persistence module $H^n(X^\bullet;\Bbbk)$ decomposes into interval modules:
\begin{equation}
H^{n}(X^\bullet;\Bbbk)\simeq \bigoplus_{i=1}^m\Bbbk[b_i,d_i)\, \text{ with }\, b_i,d_i\in \mathbb{Z}_{\geq 0}\, \text{ and }\,  b_i<d_i\leq \dim\,X+1.
\end{equation}

\begin{definition}
Given a simplicial complex $X$, $n\geq 0$, and $0\leq b<d$, we define the \emph{\mbox{$(n,b,d)$-th} thick Betti number} of $X$ with coefficients in $\Bbbk$, and we denote it $\beta^n_{b,d}(X;\Bbbk)$, as the number of intervals of the persistence barcode associated to $H^n(X^\bullet;\Bbbk)$ of the form $[b,d)$. That is, if $H^n(X^\bullet;\Bbbk)\simeq \bigoplus_{i=1}^m \Bbbk[b_i,d_i)$, then 
\begin{equation*}
\beta^n_{b,d}(X;\Bbbk)= \#\{[b_i,d_i) : b_i=b\ \text{and}\ d_i=d\}_{1\leq i\leq m}.
\end{equation*}
\end{definition}

The $(n,b,d)$-th thick Betti number of $X$ counts the $n$-dimensional holes formed by removing simplices not contained in those of dimension greater $\geq b$, which are surrounded by $(d-1)$-simplices but not fully enclosed by $d$-simplices. As a straightforward consequence of 
Theorem~\ref{Prop:InvarianzaFunctoresCoesq} and the uniqueness of the interval decomposition in Theorem~\ref{Thm:Descomposicion}, we find that the $(n,b,d)$-th thick Betti numbers are simplicial invariants.
\begin{corollary}
If $X$ and $Y$ are isomorphic simplicial complexes, then $\beta^n_{b,d}(X;\Bbbk)=\beta^n_{b,d}(Y;\Bbbk)$ for all $n\geq 0$ and $0\leq b<d$.
\end{corollary}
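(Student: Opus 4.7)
The plan is to combine the two results already in hand: Theorem~\ref{Prop:InvarianzaFunctoresCoesq}, which promotes a simplicial isomorphism $X\simeq Y$ to an isomorphism of persistence modules $H^n(X^\bullet;\Bbbk)\simeq H^n(Y^\bullet;\Bbbk)$, and Theorem~\ref{Thm:Descomposicion}, which guarantees that a pointwise finite-dimensional persistence module over a totally ordered index set admits a unique interval decomposition (up to reordering of the summands). Since $\beta^n_{b,d}(X;\Bbbk)$ is defined as the multiplicity of the interval $[b,d)$ in the barcode of $H^n(X^\bullet;\Bbbk)$, the uniqueness clause of Theorem~\ref{Thm:Descomposicion} will translate directly into equality of multiplicities.

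Concretely, first I would let $\varphi\colon X\stackrel{\sim}{\to}Y$ be a simplicial isomorphism and invoke Theorem~\ref{Prop:InvarianzaFunctoresCoesq} to obtain an isomorphism of persistence modules
\begin{equation*}
\Phi\colon H^n(X^\bullet;\Bbbk)\stackrel{\sim}{\longrightarrow} H^n(Y^\bullet;\Bbbk).
\end{equation*}
Next, I would check that both persistence modules are pointwise finite-dimensional (each vector space $H^n(X^h;\Bbbk)$ is finite-dimensional because $X$ is finite, and the index set $\{0,1,\dots,\dim X+1\}$ is finite and totally ordered), so Theorem~\ref{Thm:Descomposicion} applies and yields decompositions
\begin{equation*}
H^n(X^\bullet;\Bbbk)\simeq \bigoplus_{i=1}^m \Bbbk[b_i,d_i),\qquad H^n(Y^\bullet;\Bbbk)\simeq \bigoplus_{j=1}^{m'} \Bbbk[b_j',d_j').
\end{equation*}

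Then I would conclude as follows: composing the right-hand decomposition with $\Phi^{-1}$ gives a second interval decomposition of $H^n(X^\bullet;\Bbbk)$, and by the uniqueness (up to reordering) in Theorem~\ref{Thm:Descomposicion}, the two multisets of intervals $\{[b_i,d_i)\}$ and $\{[b_j',d_j')\}$ must coincide. In particular, for every fixed pair $0\leq b<d$ the number of summands equal to $\Bbbk[b,d)$ is the same on both sides, which is precisely the equality $\beta^n_{b,d}(X;\Bbbk)=\beta^n_{b,d}(Y;\Bbbk)$.

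There is essentially no obstacle here beyond bookkeeping: the structural work was already done in Theorem~\ref{Prop:InvarianzaFunctoresCoesq} (that isomorphic complexes yield isomorphic persistence modules) and in Theorem~\ref{Thm:Descomposicion} (uniqueness of the barcode). The only small point worth being explicit about is that the index set is totally ordered and the modules are pointwise finite-dimensional, so that the cited decomposition theorem is applicable; everything else is a direct matching of summands.
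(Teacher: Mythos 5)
Your proposal is correct and follows exactly the paper's argument: the corollary is deduced from Theorem~\ref{Prop:InvarianzaFunctoresCoesq} (a simplicial isomorphism induces an isomorphism of the coskeletal persistence modules) together with the uniqueness of the interval decomposition in Theorem~\ref{Thm:Descomposicion}. You merely spell out the bookkeeping (pointwise finite-dimensionality, totally ordered index set, matching of interval multiplicities) that the paper leaves implicit.
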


The following proposition provides information on the potential birth and death values in the barcode of $H^n(X^\bullet;\Bbbk)$: it shows that equivalence classes in $H^n(X^\bullet;\Bbbk)$ are born either at $q=0$ or at $q\geq n+2$ and that if $H^n(X;\Bbbk)$ has a non-trivial equivalence class, it cannot die until $q=n+1$. This corresponds with the intuition that we cannot create new $n$-dimensional holes until we remove $(n+1)$-dimensional simplices and that we cannot break $n$-dimensional holes until we remove the $n$-dimensional simplices.

\begin{proposition}\label{Prop:aibiValues}
If $q\leq n$, the linear map $H^n(X;\Bbbk)\to H^n(X^q;\Bbbk)$ induced by the inclusion $X^q\subseteq X$ is the identity map. Moreover, the induced morphism $H^n(X;\Bbbk)\to H^n(X^{n+1};\Bbbk)$ is surjective.
\end{proposition}
\begin{proof}
See Appendix~\ref{proof:Prop-aibiValues} for details.
\end{proof}

\begin{corollary}
    The possible non-trivial $(n,b,d)$-th thick Betti numbers are those for which $b=0$ and $d\geq n+1$ or $d>b\geq n+2$.
\end{corollary}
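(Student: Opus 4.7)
The plan is to read the dichotomy directly off Proposition~\ref{Prop:aibiValues}, using the standard correspondence between interval summands of a persistence module and the failure of injectivity/surjectivity of its structure maps. Concretely, I consider an interval summand $[b,d)$ in the decomposition of $H^n(X^\bullet;\Bbbk)$ provided by Theorem~\ref{Thm:Descomposicion}, and split the argument into constraints on $b$ and on $d$, each of which follows from one of the two assertions in Proposition~\ref{Prop:aibiValues}.

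For the birth value, the relevant observation is that a bar with $b>0$ can appear only at an index $b$ where the structure map $H^n(X^{b-1};\Bbbk)\to H^n(X^{b};\Bbbk)$ is not surjective (so that the cokernel contributes a new generator). Proposition~\ref{Prop:aibiValues} says that these maps are the identity for $b=1,\dots,n$ and that the map corresponding to $b=n+1$ is still surjective. Consequently, no interval summand can be born at any index in $\{1,2,\dots,n+1\}$, which forces either $b=0$ or $b\geq n+2$.

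For the death value in the case $b=0$, I would invoke the dual principle: a bar $[0,d)$ dies at an index $d$ only if the map $H^n(X^{d-1};\Bbbk)\to H^n(X^{d};\Bbbk)$ fails to be injective. The identity maps from Proposition~\ref{Prop:aibiValues} for $d=1,\dots,n$ are clearly injective, so $d$ cannot lie in $\{1,2,\dots,n\}$; combined with $d>b=0$ this yields $d\geq n+1$. In the case $b\geq n+2$ there is no further constraint beyond $d>b$.

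Putting the two constraints together produces exactly the admissible regimes $(b=0,\,d\geq n+1)$ and $(n+2\leq b<d)$ asserted by the corollary. The only nontrivial input is Proposition~\ref{Prop:aibiValues}, which has already been proved; I do not expect any genuine obstacle, since the remainder is the routine translation between the language of barcode decompositions and the kernel/cokernel behavior of the successive structure maps of $H^n(X^\bullet;\Bbbk)$.
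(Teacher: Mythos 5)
Your argument is correct and follows exactly the route the paper intends: the corollary is meant to be read off from Proposition~\ref{Prop:aibiValues} via the standard correspondence between interval summands and the surjectivity/injectivity of the structure maps, which is what you do. The only (harmless) gloss is that the proposition speaks of the composite maps $H^n(X;\Bbbk)\to H^n(X^h;\Bbbk)$ rather than the successive maps $H^n(X^{b-1};\Bbbk)\to H^n(X^{b};\Bbbk)$, but since those composites are identities up to index $n$ and surjective at $n+1$, your statements about the successive maps follow immediately.
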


 The previous corollary distinguishes two different types of features in the persistence barcode. The second case (where $d > b \geq n+2$) corresponds to emergent holes: features that do not exist in the original complex but appear as artifacts of the cofiltration process. Conversely, to assess the robustness of the original simplicial complex, we focus on the first case ($b=0$). For these features, the death time $d$ identifies the weakest link of the cycle: a hole dying at the $d$-th coskeleton is supported by at least one facet of dimension exactly $d-1$. Thus, $\beta^n_{0,d}(X;\Bbbk)$ counts the $n$-dimensional holes in $X$ whose defining facets have a minimum dimension of $d-1$. These death values act as a natural measure of thickness. In particular, as the following corollary states, this yields a stratification of the holes in $X$ relative to the dimension of their defining simplices (Figure~\ref{Fig:nbdCoskeletal}).

\begin{corollary}\label{Cor:StratificationCoskeletal}
Let $X$ be a simplicial complex. Then  $\beta^n_{0,d}(X;\Bbbk)=0$ for all $d\leq n$ and 
\begin{equation}
\beta^n(X;\Bbbk)=\sum_{d=n+1}^{\dim X+1} \beta^n_{0,d}(X;\Bbbk)\,.
\end{equation}
\end{corollary}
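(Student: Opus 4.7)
The plan is to read off both assertions directly from the interval decomposition of $H^n(X^\bullet;\Bbbk)$, combined with the vanishing/surjectivity statement of Proposition~\ref{Prop:aibiValues}. The key observation is that, since $X^0=X$, we have
$$\beta^n(X;\Bbbk)=\dim H^n(X^0;\Bbbk),$$
and for any interval module $\Bbbk[b_i,d_i)$ the value at $h=0$ is $\Bbbk$ precisely when $b_i=0$ (since $d_i>b_i$ always). Hence, if $H^n(X^\bullet;\Bbbk)\simeq\bigoplus_{i=1}^m\Bbbk[b_i,d_i)$, evaluating at $h=0$ gives $\beta^n(X;\Bbbk)=\#\{i:b_i=0\}$.

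Next I would stratify these intervals by their right endpoint $d$. By definition of $\beta^n_{0,d}(X;\Bbbk)$, we have
$$\#\{i:b_i=0\}=\sum_{d\geq 1}\beta^n_{0,d}(X;\Bbbk).$$
Now the preceding corollary (an immediate consequence of Proposition~\ref{Prop:aibiValues}) rules out any interval with $b_i=0$ and $d_i\leq n$: indeed, the identity map $H^n(X;\Bbbk)\to H^n(X^h;\Bbbk)$ for $h\leq n$ forces every class born at $h=0$ to survive at least until $h=n$, so no such interval can close before $d=n+1$. This simultaneously yields the vanishing statement $\beta^n_{0,d}(X;\Bbbk)=0$ for $d\leq n$ and truncates the sum to start at $d=n+1$. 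Since intervals cannot extend beyond $d=\dim X+1$ (as $X^{\dim X+1}=\emptyset$), the sum terminates at $d=\dim X+1$, giving the formula
$$\beta^n(X;\Bbbk)=\sum_{d=n+1}^{\dim X+1}\beta^n_{0,d}(X;\Bbbk).$$

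There is essentially no obstacle here: the proof is a bookkeeping argument on the barcode decomposition guaranteed by Theorem~\ref{Thm:Descomposicion}, with the non-trivial input (the identity statement for $h\leq n$) already established in Proposition~\ref{Prop:aibiValues}. The only point deserving a line of care is making explicit the elementary fact that an interval $[b_i,d_i)$ contributes to $H^n(X^0;\Bbbk)$ iff $b_i=0$, so that the count $\beta^n(X;\Bbbk)$ really does split according to the right endpoints $d_i$ as claimed.
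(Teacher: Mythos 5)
Your argument is correct and matches the paper's (implicit) reasoning exactly: the corollary is read off from the interval decomposition of $H^n(X^\bullet;\Bbbk)$ together with Proposition~\ref{Prop:aibiValues}, counting the intervals with $b_i=0$ (which compute $\dim H^n(X^0;\Bbbk)=\beta^n(X;\Bbbk)$) and using the identity statement for $h\leq n$ to exclude deaths at $d\leq n$, with $d\leq \dim X+1$ forced by $X^{\dim X+1}=\emptyset$. No gaps; this is the same bookkeeping the paper intends.
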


\begin{example}
Consider the simplicial complex $X$ in Figure~\ref{Fig:nbdCoskeletal}a, which has two holes ($\beta^1(X;\Z_2)=2$). Figure~\ref{Fig:nbdCoskeletal}b shows, in orange and dashed purple, two representatives of independent cohomology classes in $H^1(X;\Z_2)$. Only the orange one survives the elimination of the middle edge when taking the $2$-coskeleton. This is encoded in the persistence barcode (Figure~\ref{Fig:nbdCoskeletal}d), from which we obtain $\beta^1_{0,2}(X;\Z_2)=1$ (the minimum dimension of the facets defining the dashed cycle is $1$) and $\beta^1_{0,3}(X;\Z_2)=1$ (the orange hole is surrounded by triangles). However, it is important to note that this stratification cannot be achieved by analyzing the dimension of the facets containing the representatives of cohomology classes defining a basis of $H^n(X;\Bbbk)$. For example, the orange and dashed purple cycles in figure (c) also form a basis of $H^1(X;\Z_2)$ but both include an edge which is not contained in a triangle. 
\end{example}
    \begin{figure}[htb!]
\centering
\begin{subfigure}{0.15\textwidth}
\centering
    \includegraphics[width=0.9\textwidth]{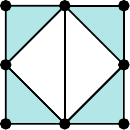}
\caption{}
\end{subfigure}
\hspace{2ex}
\begin{subfigure}{0.15\textwidth}
\centering
    \includegraphics[width=0.9\textwidth]{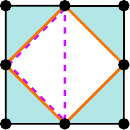}
\caption{}
\end{subfigure}
\hspace{2ex}
\begin{subfigure}{0.15\textwidth}
\centering
    \includegraphics[width=0.9\textwidth]{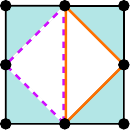}
\caption{}
\end{subfigure}
\begin{subfigure}{0.32\textwidth}
\centering
\includegraphics[width=1\textwidth]{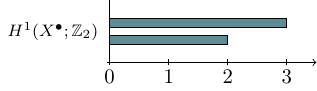}
            \caption{}
           \end{subfigure}
\caption{Simplicial complex (a), representatives of a basis of $H^1(X;\Z_2)$ in orange and dashed purple (b and c) and thickness persistence barcode (d)}\label{Fig:nbdCoskeletal}
\end{figure}

\section{How cohesive is a thick hole?}\label{Sec:Stratified}

As discussed in the introduction, analyzing the robustness of a simplicial complex involves two steps: measuring cycle thickness (using the thick Betti numbers introduced in the previous section) and evaluating the strength of their internal connections, measured by the higher-order adjacency between them. Here, we focus on the second aspect.

\subsection{Cohesive Betti numbers}

Our goal is to analyze the topological impact of removing simplices of fixed dimensions from a simplicial complex. Notice that the resulting structure is not necessarily a simplicial complex but rather a finite poset. Finite posets thus provide a suitable general framework for analyzing such cases.

In this section, we use cohomology of finite posets to introduce a new set of invariants that give us information about the cohesion of cohomological cycles in a simplicial complex. This phenomenon is illustrated in Figure~\ref{Fig:EliminacionVertices}: eliminating the vertices separates the triangles on the left while leaving the ones on the right still connected.

\begin{figure}[htb!]
\centering
\includegraphics[scale=0.8]{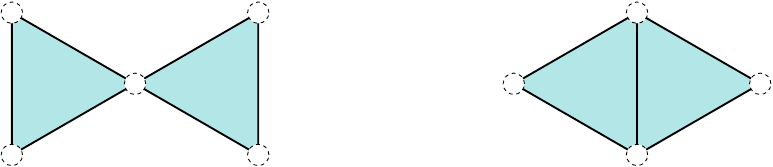}
\caption{Removing vertices from a simplicial complex}\label{Fig:EliminacionVertices}
\end{figure}

The dimensions of the simplices of a simplicial complex $X$ define a stratification of the points in the face poset $P_X$, as shown in its Hasse diagram (Figure~\ref{Fig:HasseDiagram}a). This natural partitioning motivates the study of subposets formed by selecting only certain dimensional layers.

\begin{definition}
Let $\mathbf{h}=\{h_0,h_1,\dots,h_m\}$ be a set of integers such that $0\leq h_0<h_1<\dots<h_m$. We define the \emph{$\mathbf{h}$-face poset} of $X$, and denote it by $P_X^\mathbf{h}$, as the subposet defined by the strata of $P_X$ corresponding to the dimensions $\{h_0,h_1,\dots, h_m\}$. That is,
\begin{equation*}
P_X^{\mathbf{h}}= \{ \sigma\in P_X : \dim \,\sigma= h_i \,   \text{ for some }0\leq i\leq m\}.
\end{equation*} 
\end{definition}

\begin{example}
Consider the simplicial complex in Figure~\ref{Fig:ExCosk23Triang}a. In Figure~\ref{Fig:HasseDiagram}a, we present the Hasse diagram of its associated face poset, where the triangles, edges, and vertices from Figure~\ref{Fig:ExCosk23Triang}a correspond to $2$-dimensional, $1$-dimensional, and $0$-dimensional vertices, respectively, in the Hasse diagram. Figure~\ref{Fig:HasseDiagram}b represents its associated $\{0,2\}$-face poset.\end{example}

\begin{figure}[!htb]
\centering
\begin{minipage}{0.44\textwidth}
\centering
\includegraphics[scale=0.75]{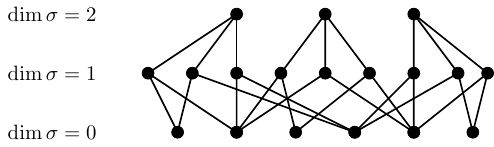}
\subcaption{$P_X$}
\end{minipage}
\begin{minipage}{0.44\textwidth}
\centering
\includegraphics[scale=0.75]{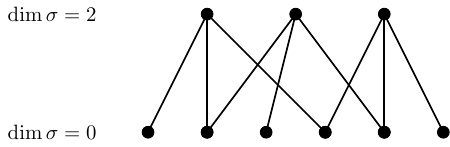}
\subcaption{$P_X^{\{0,2\}}$}
\end{minipage}
\caption{Hasse diagrams of the face poset (a) of the simplicial complex in Figure~\ref{Fig:ExCosk23Triang}a and its $\{0,2\}$-face poset (b)}\label{Fig:HasseDiagram}
\end{figure} 

\begin{definition}\label{d:cohesivebetti}
The {\em$(n,\mathbf{h})$-th cohesive Betti number} of a simplicial complex $X$ with coefficients in $\Bbbk$ is defined as the dimension of the $n$-th cohomology space with coefficients in $\Bbbk$ of the $\mathbf{h}$-face poset, 
\begin{equation*}
\beta^{n,\mathbf{h}}(X;\Bbbk)=\dim\,H^n(P_X^\mathbf{h};\Bbbk)\,.
\end{equation*}
\end{definition}

Taking $\mathbf{h}=[\dim X]=\{0,1,\dots,\dim X\}$ one recovers the usual Betti numbers of the simplicial complex, i.e., $\beta^{n,[\dim X]}(X;\Bbbk)=\beta^n(X;\Bbbk)$ (by Corollary~\ref{Cor:BettiNumbersSheafCohomology}). Thus, like the thick Betti numbers, the set of cohesive Betti numbers of $X$ extends the set of ordinary Betti numbers. 

\begin{remark}
Notice that $(\beta^{0,\{0\}}(X;\Bbbk),\, \beta^{0,\{1\}}(X;\Bbbk),\dots,\,\beta^{0,\{\dim X\}}(X;\Bbbk))$ is the so-called \mbox{$f$-vector} of $X$ and $(\beta^{0,[\dim X]}(X;\Bbbk),\, \beta^{1,[\dim X]}(X;\Bbbk),\dots,\, \beta^{\dim X,[\dim X]}(X;\Bbbk))$ is the \mbox{$\beta$-vector} of $X$.
\end{remark}

Let us consider a simplicial isomorphism $f\colon X\stackrel{\sim}\to Y$. Setting $P_f(\sigma)\coloneqq f(\sigma)$ yields an isomorphism between the face posets $P_f\colon P_X\to P_Y$, whose restriction to $P_X^\mathbf{h}$ gives an isomorphism between $P_X^\mathbf{h}$ and $P_Y^\mathbf{h}$. Consequently, by the functoriality of poset cohomology, one concludes that  $H^n(P_X^\mathbf{h};\Bbbk)\simeq H^n(P_Y^\mathbf{h};\Bbbk)$ for all $n\geq 0$. Hence, we obtain the following result regarding the simplicial invariance for cohesive Betti numbers.
\begin{theorem}\label{t:cohesivebetti} 
If $X$ and $Y$ are isomorphic simplicial complexes, then $\beta^{n,\mathbf{h}}(X;\Bbbk)=\beta^{n,\mathbf{h}}(Y;\Bbbk)$ for all $n\geq 0$ and all $\mathbf{h}=\{h_0,h_1,\dots, h_m\}$ such that $0\leq h_0<h_1<\dots<h_m$. 
\end{theorem}

    The order complex of the $\mathbf{h}$-face poset of $X$ is a simplicial subcomplex of the barycentric subdivision of $X$:
\begin{equation}\label{e:BS}
        \mathcal{K}(P_X^{\mathbf{h}})= \{(\sigma_0,\sigma_1,\dots,\sigma_n)\in \mathcal{K}(X) : \dim \sigma_i\in \mathbf{h} \text{ for all } 0\leq i\leq n\}.
\end{equation}
    That is, $\mathcal{K}(P_X^{\mathbf{h}})$ is the simplicial complex obtained by removing from $\mathcal{K}(X)$ the stars of the vertices of $\mathcal{K}(X)$ associated with the simplices in $X$ whose dimension is not in $\mathbf{h}$:
   \begin{equation*}
\mathcal{K}(P_X^\mathbf{h})=\mathcal{K}(X)\smallsetminus\bigcup_{\substack{\dim\sigma\notin \mathbf{h}\\[0.8ex] \sigma \in X}}\mathrm{st}_{\mathcal{K}(X)}(\sigma),
\end{equation*}
where the star of a vertex $\sigma\in \mathcal{K}(X)$ is defined as $\mathrm{st}_{\mathcal{K}(X)}(\sigma) \coloneqq \{\tau \in \mathcal{K}(X) : \sigma \in \tau\}$. Note that deleting the stars of a set of vertices from a simplicial complex always leaves a valid simplicial complex, since the property of being closed under taking subsets is preserved.

\begin{example}
Let $X$ be the simplicial complex in Figure~\ref{Fig:ExCosk23Triang}a. Taking $\mathbf{h}=\{0,2\}$, that is, removing the edges from $X$, we obtain the following cohesive Betti numbers
\begin{equation*}
    \beta^{0,\{0,2\}}(X;\Z_2)=1 \ \ \text{ and }\ \ \beta^{1,\{0,2\}}(X;\Z_2)=1.
\end{equation*}
They correspond with the topological properties of the order complexes of the $\{0,2\}$-face poset of $X$ (Figure~\ref{Fig:ExCoskBaricentrica}a): the simplicial complex $\mathcal{K}(P_X^{\{0,2\}})$ has a connected component and a hole. 

When we instead remove the vertices, $\mathbf{h}=\{1,2\}$, we obtain cohesive Betti numbers
\begin{equation*}
    \beta^{0,\{1,2\}}(X;\Z_2)=3 \ \ \text{ and }\ \ \beta^{1,\{1,2\}}(X;\Z_2)=0.
\end{equation*}
This corresponds to the three connected components and the absence of holes in $\mathcal{K}(P_X^{\{1,2\}})$ (Figure~\ref{Fig:ExCoskBaricentrica}b).
\end{example}
    \begin{figure}[!htb]
\centering
\begin{subfigure}{0.45\textwidth}
\centering
\includegraphics[scale=1]{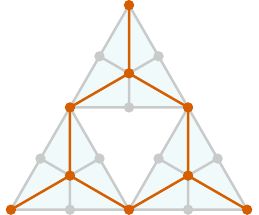}
\caption{$\mathcal{K}(P_X^{\{0,2\}})$}
\end{subfigure}
\begin{subfigure}{0.45\textwidth}
\centering
\includegraphics[scale=1]{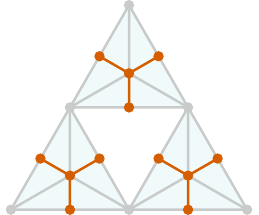}
\caption{
  $\mathcal{K}(P_X^{\{1,2\}})$
}
\end{subfigure}
\caption{Barycentric subdivision of simplicial complex in Figure~\ref{Fig:ExCosk23Triang}a and the corresponding subcomplexes $\mathcal{K}(P_X^{\{0,2\}})$ and $\mathcal{K}(P_X^{\{1,2\}})$ (marked in orange)}\label{Fig:ExCoskBaricentrica}
\end{figure}

We now restrict our attention to the $n=0$ cohesive Betti numbers. To understand the information given by this set of invariants, let us consider, for each $0\leq h_0<h_1$ and each simplicial complex $X$, the following vector space:
\begin{equation*}
\Gamma^{h_0,h_1}(X;\Bbbk)\coloneqq \{x\in \prod_{\substack{\sigma\in X\\ \dim \,\sigma=h_0}}\Bbbk\ : \ x_{\sigma}=x_{\sigma'} \ \, \forall\,\sigma,\sigma' \face \tau\in X : \dim\tau\geq h_1\}.
\end{equation*}

\begin{proposition}\label{Prop:StratifiedCaminos}
The $\Bbbk$-vector spaces $\Gamma^{h_0,h_1}(X;\Bbbk)$ and $H^0(P_X^\mathbf{h};\Bbbk)$ are isomorphic for any \mbox{$\mathbf{h}=\{h_0,h_1,\dots,h_m\}$.}
\end{proposition}
\begin{proof}
See Appendix~\ref{proof:Prop-StratifiedCaminos} for details.
\end{proof}

\begin{corollary}\label{cor:StratifiedCaminos}
The $(0,\mathbf{h})$-th cohesive Betti number of $X$ counts the number of $(h_0,h_1)$-connected components of $X$,  
\begin{equation*}
    \beta^{0,\mathbf{h}}(X;\Bbbk)=\#\faktor{S^{h_0}(X)}{\sim_{h_0h_1}}.
\end{equation*}
\end{corollary}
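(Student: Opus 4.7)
The plan is to invoke Proposition~\ref{Prop:StratifiedCaminos} to replace the cohomological quantity with the combinatorial space $\Gamma^{h_0,h_1}(X;\Bbbk)$, and then show that the defining constraints of $\Gamma^{h_0,h_1}(X;\Bbbk)$ are exactly the relations that generate $\sim_{h_0h_1}$, so that this vector space admits a canonical basis indexed by the $(h_0,h_1)$-connected components.

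First, by Proposition~\ref{Prop:StratifiedCaminos}, there is an isomorphism $H^0(P_X^{\mathbf{h}};\Bbbk)\simeq \Gamma^{h_0,h_1}(X;\Bbbk)$, so it suffices to compute the dimension of the latter. Recall that
\begin{equation*}
\Gamma^{h_0,h_1}(X;\Bbbk)=\{x\in \textstyle\prod_{\sigma\in S^{h_0}(X)}\Bbbk \ : \ x_\sigma=x_{\sigma'} \text{ whenever } \sigma,\sigma'\face\tau \text{ for some } \tau\in X,\,\dim\tau\geq h_1\}.
\end{equation*}

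Next, I would unfold the definition of the equivalence relation $\sim_{h_0h_1}$. If $\sigma\sim_{h_0h_1}\sigma'$, there exists an $(h_0,h_1)$-walk $\{\tau_1,\dots,\tau_r\}$ together with intermediate $h_0$-simplices $\sigma=\sigma_0,\sigma_1,\dots,\sigma_r=\sigma'$ such that $\sigma_{i-1},\sigma_i\face\tau_i$ with $\dim\tau_i\geq h_1$. Hence, for any $x\in\Gamma^{h_0,h_1}(X;\Bbbk)$, iterating the defining relation gives $x_\sigma=x_{\sigma_0}=x_{\sigma_1}=\dots=x_{\sigma_r}=x_{\sigma'}$. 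This shows that every element of $\Gamma^{h_0,h_1}(X;\Bbbk)$ is constant on each class of $S^{h_0}(X)/\sim_{h_0h_1}$.

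Conversely, I would check that any function $x$ which is constant on each $\sim_{h_0h_1}$-class lies in $\Gamma^{h_0,h_1}(X;\Bbbk)$: if $\sigma,\sigma'\face\tau$ with $\dim\tau\geq h_1$, then the one-term walk $\{\tau\}$ (noting $\dim\tau\geq h_1$ is automatically consistent with the definition by taking this single simplex of dimension $\geq h_1$) witnesses $\sigma\sim_{h_0h_1}\sigma'$, so $x_\sigma=x_{\sigma'}$. Therefore $\Gamma^{h_0,h_1}(X;\Bbbk)$ coincides with the space of functions $S^{h_0}(X)\to\Bbbk$ which are constant on $\sim_{h_0h_1}$-classes, and the indicator functions of these classes form a basis.

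The main subtlety is not a deep one: it lies in verifying that the elementary relation that defines $\Gamma^{h_0,h_1}(X;\Bbbk)$ (cofaces of a common simplex of dimension $\geq h_1$) generates precisely the equivalence relation $\sim_{h_0h_1}$, once one accounts correctly for the chain of intermediate $h_0$-faces extracted from each $(h_0,h_1)$-walk. Concluding that $\dim \Gamma^{h_0,h_1}(X;\Bbbk)=\#(S^{h_0}(X)/\sim_{h_0h_1})$ then yields the corollary.
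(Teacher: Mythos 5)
Your proposal is correct and follows essentially the same route as the paper: invoke Proposition~\ref{Prop:StratifiedCaminos} and then identify $\Gamma^{h_0,h_1}(X;\Bbbk)$ with the space of functions on $S^{h_0}(X)$ constant on $\sim_{h_0h_1}$-classes, whose dimension is the number of classes (the paper states this last identification in one line, which you flesh out). One minor technical caveat: since the paper's definition of an $(h_0,h_1)$-walk requires $h_1=\min\{\dim\tau_i\}$ exactly, your one-term walk $\{\tau\}$ is literally admissible only when $\dim\tau=h_1$; if $\dim\tau>h_1$, take instead the walk $\{\nu,\tau\}$, where $\nu\face\tau$ is an $h_1$-dimensional face containing $\sigma$, which provides the required witness without altering your argument.
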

\begin{proof}
See Appendix~\ref{proof:cor-StratifiedCaminos} for details.
\end{proof}

Therefore, just as the $0$-th Betti number provides information about the connectivity of the simplicial complex, the cohesive Betti numbers associated with the $0$-th cohomology spaces give information about the connectivity of simplices via higher-order adjacencies. Proposition~\ref{Prop:Coskeletal0h} and Corollary \ref{cor:StratifiedCaminos} imply the following formula.

\begin{corollary}
For $\mathbf{h}=\{0,h_1,\dots,h_m\}$, the \mbox{$(0,\mathbf{h})$-th} cohesive Betti number and the $(0,h_1)$-th thick Betti number of $X$ are related as follows:
\begin{equation*}
    \beta^{0,\{0,h_1,\dots,h_m\}}(X;\Bbbk)=\beta^{0,h_1}(X;\Bbbk)+\#\{v\in V(X) : v\centernot{\lhd}\sigma \ ,\, \forall \sigma\in S^{h_1}(X)\}.
\end{equation*}
\end{corollary}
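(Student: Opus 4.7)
The approach is essentially bookkeeping: both sides can be rewritten as quantities involving the quotient set $V(X)/\sim_{0h_1}$, and the identity then drops out by subtraction.

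First, apply the corollary immediately preceding (which identifies the $(0,\mathbf{h})$-th cohesive Betti number with a count of higher-order connected components) to the specific choice $\mathbf{h}=\{0,h_1,\dots,h_m\}$, so that $h_0=0$. Since the relation $\sim_{h_0h_1}$ depends only on $h_0$ and $h_1$ and not on the remaining strata $h_2,\dots,h_m$, this yields
\begin{equation*}
\beta^{0,\{0,h_1,\dots,h_m\}}(X;\Bbbk)=\#\faktor{V(X)}{\sim_{0h_1}}.
\end{equation*}

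Next, invoke Proposition~\ref{Prop:Coskeletal0h} with $h=h_1$ to obtain
\begin{equation*}
\beta^{0,h_1}(X;\Bbbk)=\#\faktor{V(X)}{\sim_{0h_1}}-\#\{v\in V(X):v\centernot{\lhd}\sigma\text{ for all }\sigma\in S^{h_1}(X)\}.
\end{equation*}
Subtracting the second expression from the first gives the desired identity.

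There is really no obstacle here beyond verifying that the cohesive Betti number depends only on $h_0$ and $h_1$ in this context; this is immediate from the preceding corollary, whose right-hand side $\#S^{h_0}(X)/\!\sim_{h_0h_1}$ makes no reference to the strata $h_2,\dots,h_m$. Thus the result follows by direct substitution, and the plan is simply to chain the two previously established formulas.
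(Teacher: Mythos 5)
Your proof is correct and follows exactly the paper's argument: the result is obtained by combining the preceding corollary (with $h_0=0$, giving $\beta^{0,\mathbf{h}}(X;\Bbbk)=\#\,V(X)/\!\sim_{0h_1}$) with Proposition~\ref{Prop:Coskeletal0h} applied at $h=h_1$, and subtracting. Your remark that the relevant quotient depends only on $h_0$ and $h_1$ is a correct and useful clarification, but the route is the same as in the paper.
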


\begin{remark}
Although the classification given by the cohesive Betti numbers is generally finer than that given by the thick Betti numbers (mainly because the former encode the number of simplices in each dimension by taking $\mathbf{h}=\{h\}$), there are simplicial complexes with the same cohesive Betti numbers but with different thick Betti numbers (Figure~\ref{Fig:ComparacionCoskStrati} and Table \ref{Tab:ComparacionCoskStrat}).
\end{remark}
\begin{figure}[htb!]
    \centering
    \includegraphics[width=0.5\linewidth]{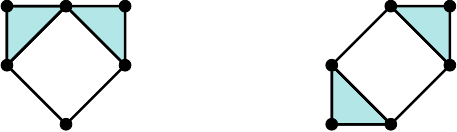}
    \caption{Two simplicial complexes with the same cohesive Betti numbers but different thick Betti numbers (Table \ref{Tab:ComparacionCoskStrat})}\label{Fig:ComparacionCoskStrati}
\end{figure}

\begin{table}[htb!]
\centering
\begin{tabular}{c | cc || c | cc}
\toprule
\multicolumn{3}{c||}{Cohesive Betti numbers} & \multicolumn{3}{c}{Thick Betti numbers} \\ \midrule
\multicolumn{1}{l}{}  &  Left & Right & \multicolumn{1}{l}{}  &  Left & Right \\ 
\midrule
        $\beta^{0,\{0\}}(X;\Z_2)$      &  $6$ &  $6$ & $\beta^{0,0}(X;\Z_2)$ & $1$ & $1$\\
        $\beta^{0,\{1\}}(X;\Z_2)$      &  $8$ &  $8$ &      $\beta^{1,0}(X;\Z_2)$ & $1$ & $1$ \\
        $\beta^{0,\{2\}}(X;\Z_2)$      &  $2$ &  $2$ & $\beta^{0,1}(X;\Z_2)$ & $1$ & $1$ \\
        $\beta^{0,\{0,1\}}(X;\Z_2)$    &  $1$ &  $1$ & $\beta^{1,1}(X;\Z_2)$ & $1$ & $1$\\
        $\beta^{1,\{0,1\}}(X;\Z_2)$    &  $3$ &  $3$ & $\pmb{\beta^{0,2}(X;\Z_2)}$ & $\pmb{1}$ & $\pmb{2}$ \\
        $\beta^{0,\{0,2\}}(X;\Z_2)$    &  $2$ &  $2$ & $\beta^{1,2}(X;\Z_2)$ & $0$ & $0$ \\
        $\beta^{1,\{0,2\}}(X;\Z_2)$    &  $0$ &  $0$ \\
        $\beta^{0,\{1,2\}}(X;\Z_2)$    &  $4$ &  $4$ \\
        $\beta^{1,\{1,2\}}(X;\Z_2)$    &  $0$ &  $0$ \\
        $\beta^{0,\{1,2,3\}}(X;\Z_2)$  &  $1$ &  $1$ \\
        $\beta^{1,\{1,2,3\}}(X;\Z_2)$  &  $1$ &  $1$ 
\end{tabular}
\caption{Betti numbers of the simplicial complexes in Figure~\ref{Fig:ComparacionCoskStrati}}\label{Tab:ComparacionCoskStrat}
\end{table}

\subsection{Persistence of a hole's cohesion}\label{ss:cohesivepersistence}

Similar to thick Betti numbers, the cohesion of homology cycles as a robustness indicator in a simplicial complex should be considered within the context of persistence, which can be studied by means of the linear map
\begin{equation}\label{eq:holesmap}
\varphi^{n,\mathbf{h}}\colon H^n(X;\Bbbk)\longrightarrow H^n(P_X^\mathbf{h};\Bbbk)
\end{equation}
induced by the inclusion $P_X^\mathbf{h}\subseteq P_X$ and the isomorphism $H^n(X;\Bbbk)\simeq H^n(P_X;\Bbbk)$.

One can interpret the kernel, image, and cokernel of this linear map in the following way:

\begin{itemize}
\item The kernel of $\varphi^{n,\mathbf{h}}$ corresponds to the holes in $X$ that disappear when removing the simplices whose dimension is not in $\mathbf{h}$. 
\item The image of $\varphi^{n,\mathbf{h}}$ counts the holes of $X$ that persist after the elimination of simplices.
\item The cokernel of $\varphi^{n,\mathbf{h}}$ captures the new holes that arise in the construction.
\end{itemize}

\begin{remark}
As with thick Betti numbers, this process of simplicial removal can lead to the emergence of new holes that were not present in the original simplicial complex, as shown in Figure~\ref{Fig:CreacionAgujeroStratified}.
\end{remark}

\begin{figure}[!htb]
\centering
\begin{subfigure}{0.45\textwidth}
\centering
\includegraphics[width=0.6\linewidth]{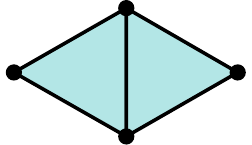}
\caption{$\beta^{1}(X;\Z_2)=0$}
\end{subfigure}
\begin{subfigure}{0.45\textwidth}
\centering
\includegraphics[width=0.6\linewidth]{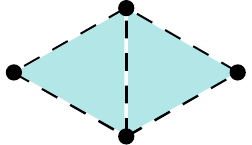}
\caption{$\beta^{1,\{0,2\}}(X;\Z_2)=1$}
\end{subfigure}
\caption{Removing all edges from the simplicial complex creates a 1-dimensional hole (specifically due to the removal of the intersecting edge), as captured by the cohesive Betti numbers}\label{Fig:CreacionAgujeroStratified}
\end{figure}

\begin{definition}
We define the persistent $(n,\mathbf{h})$-cohesive Betti number by:
\begin{equation*}
    \beta^{n,\mathbf{h}}_{\img }(X;\Bbbk)=\dim \img\varphi^{n,\mathbf{h}}.
\end{equation*}
\end{definition}

Notice that it holds that $\beta^{n,\mathbf{h}}_{\img}(X;\Bbbk)\leq \beta^{n}(X;\Bbbk)$. By comparing $\beta^{n}(X;\Bbbk)$ and $\beta^{n,\mathbf{h}}_{\img}(X;\Bbbk)$, we can measure the robustness of holes in terms of the strength of their internal connections or adjacencies, meaning their level of cohesion. The closer $\beta^{n,\mathbf{h}}(X;\Bbbk)$ is to $\beta^n(X;\Bbbk)$ for each $\mathbf{h}$, the more cohesive the $n$-dimensional holes are. 
 
 \begin{example}
Observe Figure~\ref{Fig:TresComplejosStratified}. The hole in the simplicial complex on the left is defined by a closed $(1,2)$-walk, meaning it persists even when removing the edges or vertices of $X$. In the second simplicial complex, the cycle is determined by a closed $(0,2)$-walk. Removing the vertices destroys the hole, as there is no closed walk of $1$-lower adjacent triangles surrounding it, which is captured by $\beta^{1,\{1,2\}}_{\img}(Y;\Z_2)$. In the last simplicial complex, the hole is not defined by triangles and thus does not persist in either case.
\end{example}

\begin{figure}[htb!]
\centering
\includegraphics[width=0.9\linewidth]{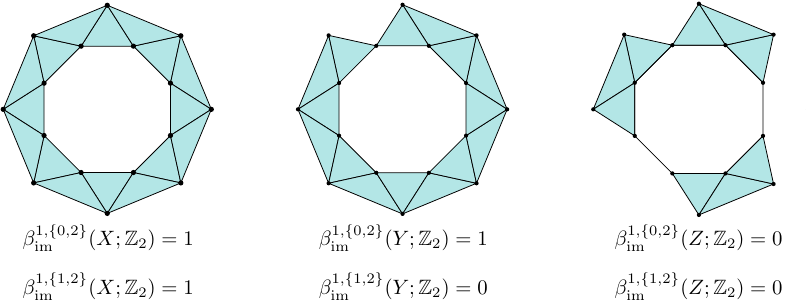}
\caption{Cohesive Betti numbers distinguish holes in simplicial complexes based on the dimension and adjacency of the surrounding simplices}\label{Fig:TresComplejosStratified}
\end{figure}

\section{Resilience: thick and cohesive bipersistence}\label{Sec:Resilience}

This section studies resilience as a persistence property of connected components and holes under prescribed simplicial degradation processes. Given a cofiltration modeling an attack, obtained by progressively removing vertices, edges, triangles, or higher-dimensional simplices, we combine this parameter with a second structural parameter measuring either thickness or cohesiveness. This produces a biparameter persistence module. In order to extract interpretable information, we fix one structural parameter and study the resulting ladder modules, together with the one-parameter persistence modules given by their image, kernel, and cokernel. These modules allow us to distinguish features that remain thick or cohesive during the degradation process, features that lose this structural support before disappearing, and features that are created by the removal of simplices.

\subsection{Resilience of thick holes}

Let
\begin{equation}\label{Eq:Cofiltration}
   X^{\bullet}\colon X^0\supseteq X^1\supseteq X^2\supseteq \dots \supseteq X^M
\end{equation}
be a cofiltration of simplicial complexes modeling an arbitrary degenerative process on an initial simplicial complex $X^0$ of dimension $N$ (in which vertices, edges, triangles, etc., may disappear at each step). As illustrated in Sect.~\ref{Subsec:ThicknessPersistence}, a comprehensive study of the thickness of the holes requires a dynamic perspective. In this regard, for each $X^i$, we consider the cofiltration defined by its coskeletons: 
\begin{equation*}
    X^{i,\bullet}\colon X^{i,0}\supseteq X^{i,1}\supseteq X^{i,2}\supseteq \dots \supseteq X^{i,N},
\end{equation*}
where $X^{i,q}$ denotes the $q$-coskeleton of $X^i$. Varying both parameters, $i=0,1,\dots, M$ and $q=0,1,\dots, N$, we obtain a biparameter persistence module 
\begin{equation}\label{Eq:PersistenceBimoduleThick}
    \begin{tikzcd}
    H^n(X^{0,N};\Bbbk) \arrow[r] & H^n(X^{1,N};\Bbbk) \arrow[r] & H^n(X^{2,N};\Bbbk) \arrow[r] & \dots \arrow[r] & H^n(X^{M,N};\Bbbk) \\ 
    \vdots \arrow[u] \arrow[r, phantom, ""{coordinate, name=Z}] & \vdots \arrow[u] \arrow[r, phantom, ""{coordinate, name=Z}] & \vdots \arrow[u] \arrow[r, phantom, ""{coordinate, name=Z}] & \dots \arrow[r, phantom, ""{coordinate, name=Z}] & \vdots \arrow[u] \\[-3ex]
 H^n(X^{0,1};\Bbbk)  \arrow[r] & H^n(X^{1,1};\Bbbk)  \arrow[r] & H^n(X^{2,1};\Bbbk)  \arrow[r] & \dots \arrow[r] & H^n(X^{M,1};\Bbbk)  \\
 H^n(X^0;\Bbbk) \arrow[u] \arrow[r] & H^n(X^1;\Bbbk) \arrow[u] \arrow[r] & H^n(X^2;\Bbbk) \arrow[u] \arrow[r] & \dots \arrow[r] & H^n(X^M;\Bbbk) \arrow[u]
   \end{tikzcd}
\end{equation}
The horizontal axis ($i$) reveals how features emerge or disappear as simplices are removed from the initial complex, while the vertical axis ($q$) tracks how features change when restricting to higher-dimensional simplices. 

As we mentioned at the end of Section~\ref{Subsec:Persistence}, a limitation of multipersistence is the absence, in general, of a discrete invariant that classifies persistence modules in the same way that the barcode classifies one-parameter persistence modules. For this reason, and inspired by the framework of~\cite{CohenSteiner_Otros09}, we extract one-parameter information by fixing a value of the structural parameter. More precisely, for each $q=1,\dots,N$ we fix the thickness parameter and study the resulting ladder module
\begin{equation} \label{e:Thick6pack}
    \begin{tikzcd}
 H^n(X^{0,q};\Bbbk)  \arrow[r] & H^n(X^{1,q};\Bbbk)  \arrow[r] & H^n(X^{2,q};\Bbbk)  \arrow[r] & \dots \arrow[r] & H^n(X^{M,q};\Bbbk)  \\
 H^n(X^0;\Bbbk) \arrow[u, "\varphi^{n,0,q}"] \arrow[r] & H^n(X^1;\Bbbk) \arrow[u, "\varphi^{n,1,q}"] \arrow[r] & H^n(X^2;\Bbbk) \arrow[u,"\varphi^{n,2,q}"] \arrow[r] & \dots \arrow[r] & H^n(X^M;\Bbbk) \arrow[u,"\varphi^{n,M,q}"]
    \end{tikzcd}
\end{equation}
which allows us to analyze the subsequent collection of one-parameter persistence modules:

\begin{itemize}
    \item The $n$-th persistence module of the cofiltration modeling the degenerative process, on the bottom horizontal line,
\begin{equation*}
    H^n(X^\bullet; \Bbbk) : H^n(X^0; \Bbbk) \to H^n(X^{1}; \Bbbk) \to \cdots \to H^n(X^{M}; \Bbbk) \to 0.
\end{equation*}
    \item The $n$-th persistence module of the $q$-coskeleton of the cofiltration, on the top horizontal line, 
    \begin{equation*}
        H^n(X^{\bullet,q}; \Bbbk) : H^n(X^{0,q}; \Bbbk) \to H^n(X^{1,q}; \Bbbk) \to \cdots \to H^n(X^{M,q}; \Bbbk) \to 0.
    \end{equation*}
    \item The image persistence module
    \begin{equation*}
        \img \varphi^{n,\bullet,q} : \img \varphi^{n,0,q} \to \img \varphi^{n,1,q} \to \cdots \to \img \varphi^{n,M,q} \to 0.
    \end{equation*}
    It registers the persistence of holes enclosed by simplices of dimension at least $q$ within the cofiltration $X^\bullet$.
    \item The kernel persistence module
    \begin{equation*}
        \ker \varphi^{n,\bullet,q} : \ker \varphi^{n,0,q} \to \ker \varphi^{n,1,q} \to \cdots \to \ker \varphi^{n,M,q} \to 0.
    \end{equation*}
    It captures the evolution of holes that are enclosed by simplices of dimension less than $q$ as the cofiltration $X^\bullet$ progresses.
    \item The cokernel persistence module
    \begin{equation*}
        \coker \varphi^{n,\bullet,q} : \coker \varphi^{n,0,q} \to \coker \varphi^{n,1,q} \to \cdots \to \coker \varphi^{n,M,q} \to 0.
    \end{equation*}
 It encodes the persistence of cohomology classes that emerge when simplices outside the $q$-coskeleton are removed.
\end{itemize}
Let us illustrate this study with the following examples.
\begin{example}\label{Ex:PersistentThick}
 In Figure~\ref{Fig:FiltrationPersistentCosk}a we have a cofiltration of simplicial complexes with $5$ steps. In the first step, $X^0$, there is a $1$-dimensional hole that is fully enclosed by triangles ($2$-simplices). Along the cofiltration, some of the triangles surrounding the hole disappear, resulting in a thinning of the hole. However, the persistence barcode of the cofiltration does not capture any change in the structure of the simplicial complexes (Figure~\ref{Fig:FiltrationPersistentCosk}c). Examining the cofiltration of $2$-coskeletons (Figure~\ref{Fig:FiltrationPersistentCosk}b), we observe that the hole disappears at step 4. The persistence barcode of the cofiltration $X^{\bullet,2}$ (Figure~\ref{Fig:FiltrationPersistentCosk}d) reflects that the hole in $X^\bullet$ is no longer enclosed by triangles in $X^4$. In this way, although the hole persists throughout the cofiltration, we can see that it is suffering some damage, causing a loss of thickness that could indicate a future disappearance of the hole. It is worth noting that, in this example, the image, kernel, and cokernel persistence modules offer little additional insight beyond what is already captured by the persistence module of the $2$-coskeleton. In Example~\ref{ex:kercokerbarcode} we present a case where such information is less immediately accessible.
\end{example}
\begin{figure}[!htb]
    \centering
    \begin{subfigure}{0.9\textwidth}
         \includegraphics[scale=0.5]{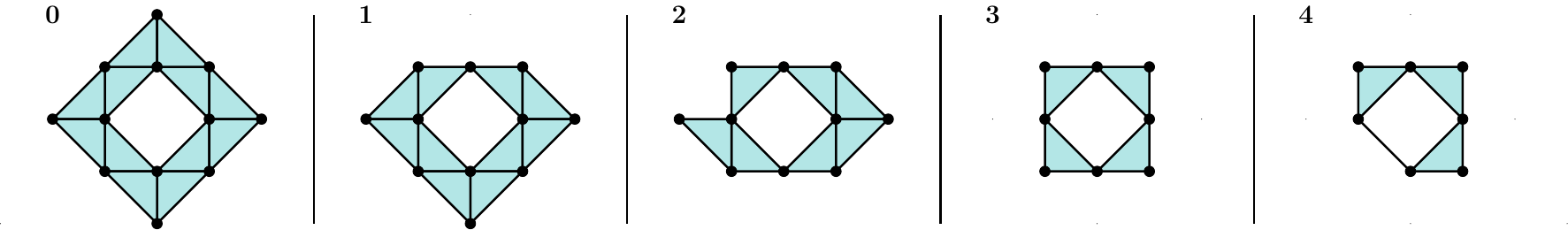}
         \caption{Cofiltration $X^\bullet$}\label{Fig:FiltrationPersistentCoskA}
    \end{subfigure}
    
 \vspace{3ex}
 
 \begin{subfigure}{0.9\textwidth}
         \includegraphics[scale=0.5]{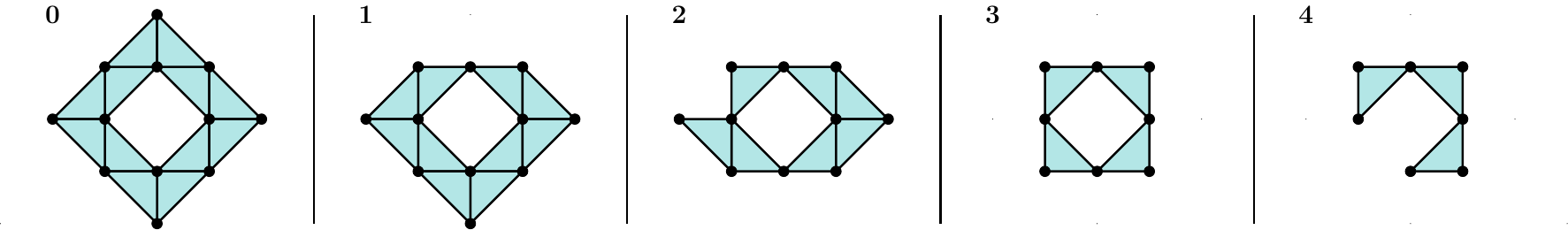}
         \caption{Cofiltration $X^{\bullet,2}$}\label{Fig:FiltrationPersistentCoskB}
    \end{subfigure}

 \vspace{3ex}
   
    \begin{minipage}{0.49\textwidth}
        \centering
        \includegraphics[scale=0.8]{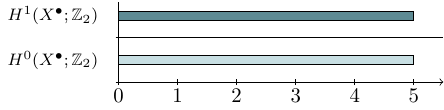}
        \subcaption{Barcodes of cofiltration $X^\bullet$} \label{Fig:FiltrationPersistentCoskC}
         \vspace{2ex}
    \end{minipage}
    \begin{minipage}{0.49\textwidth}
        \centering
        \includegraphics[scale=0.8]{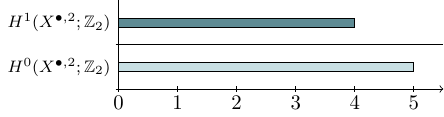}
        \subcaption{Barcodes of cofiltration $X^{\bullet,2}$}\label{Fig:FiltrationPersistentCoskD}
         \vspace{2ex}
    \end{minipage}
    \caption{Cofiltration of simplicial complexes, cofiltration of $2$-coskeletons and their corresponding barcodes}\label{Fig:FiltrationPersistentCosk}
\end{figure}

\begin{example}\label{ex:kercokerbarcode}
Figure~\ref{Fig:Filtration6PackThick} shows a cofiltration and the associated persistence barcodes for $q=2$. We focus here on $0$-th cohomology, that is, on the evolution of connected components. In the cofiltration $X^\bullet$ there are two connected components: one persists throughout the entire cofiltration, and another is born at step 1 and dies at step 3.

The image barcode indicates that the long-persistent connected component in $X^\bullet$ is defined by triangles whereas the shorter-lived component is defined by lower dimensional simplices at step $2$, which is when its corresponding bar is born in the kernel persistence module. The bar in $\ker \varphi^{0,\bullet,2}$ reflects the thinning of the connected component, anticipating its disappearance. Finally, the bar in the cokernel persistence module reflects the connected component that emerges at step 0 when passing to the $2$-coskeleta, which becomes a connected component of $X^\bullet$ at step 1.
\end{example}

\begin{figure}[!htb]
    \centering
\begin{subfigure}{0.9\textwidth}
    \centering
    \includegraphics[width=1\linewidth]{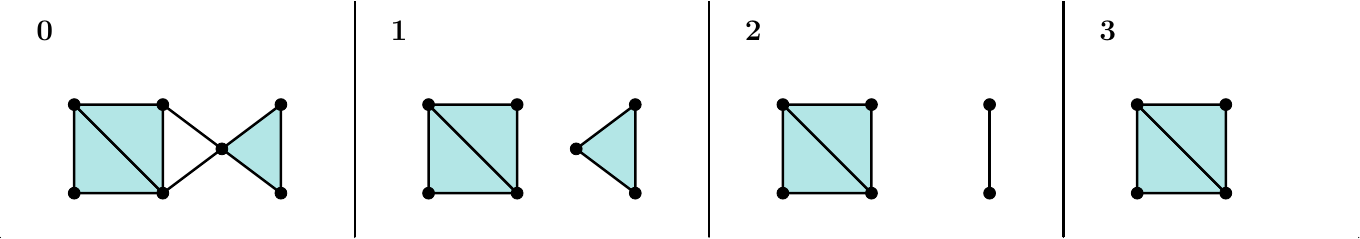}
    \subcaption{Cofiltration $X^\bullet$}
\end{subfigure}
\par\bigskip
\begin{subfigure}{0.48\textwidth}
    \centering
    \includegraphics[width=0.9\linewidth]{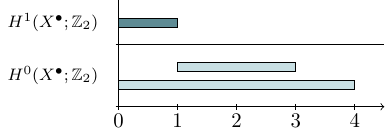}
    \subcaption{\footnotesize{Barcodes of the cofiltration $X^\bullet$}}
\end{subfigure}
  \hfill
\begin{subfigure}{0.48\textwidth}
    \centering
    \includegraphics[width=0.9\linewidth]{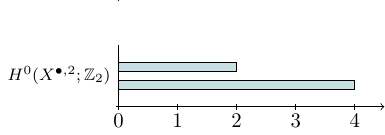}
    \subcaption{\footnotesize{Barcode of the cofiltration of $2$-coskeletons}}
\end{subfigure} 
\par\bigskip
\begin{subfigure}{0.32\textwidth}
    \centering
    \includegraphics[width=0.9\linewidth]{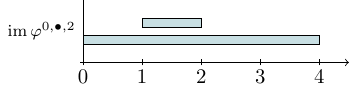}
    \subcaption{\footnotesize{Barcode of the image persistence module}}
\end{subfigure}
  \hfill
\begin{subfigure}{0.32\textwidth}
    \centering
    \includegraphics[width=0.9\linewidth]{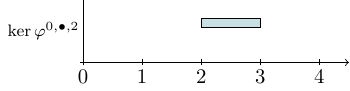}
    \subcaption{\footnotesize{Barcode of the kernel persistence module}}
\end{subfigure}
  \hfill
\begin{subfigure}{0.32\textwidth}
    \centering
    \includegraphics[width=0.9\linewidth]{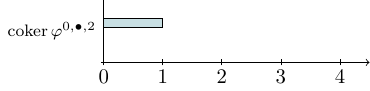}
    \subcaption{\footnotesize{Barcode of the cokernel persistence module}}
\end{subfigure}
\caption{Cofiltration of simplicial complexes and persistence barcodes arising from the $2$-coskeletons}\label{Fig:Filtration6PackThick}
\end{figure}

\subsection{Resilience of cohesive holes}

Example~\ref{Ex:PersistentThick} illustrates the utility of thick Betti numbers in analyzing the degenerative dynamics of a simplicial complex. However, Figure~\ref{Fig:FiltrationPersistentCosk} also shows that thickness alone does not fully characterize the degradation process. Specifically, it fails to capture that the hole is no longer supported by a closed $(1,2)$-walk at step $1$, as well as the progressive structural weakening in steps $2$ and $3$. This limitation, which arises from the inability to track higher-order adjacencies among the simplices supporting cohomological cycles, is precisely the gap that the resilience analysis based on cohesive Betti numbers is designed to address.

Given the cofiltration in equation~\eqref{Eq:Cofiltration}: 
\begin{equation*}
     X^{\bullet}\colon X^0\supseteq X^1\supseteq X^2\supseteq \dots \supseteq X^M\,,
\end{equation*}
for each $\mathbf{h}=\{h_0,h_1,\dots,h_m\}$, the $\mathbf{h}$-face posets of simplicial complexes $X^\bullet$ form a cofiltration of finite posets:
\begin{equation*}
    P^{\mathbf{h}}_{X^\bullet}\colon P_{X^0}^\mathbf{h}\supseteq P_{X^1}^\mathbf{h}\supseteq P_{X^2}^\mathbf{h}\supseteq \dots \supseteq P_{X^M}^\mathbf{h}.
\end{equation*}
Cofiltration $P^{\mathbf{h}}_{X^\bullet}$ and the corresponding linear maps $\varphi^{n,\bullet,\mathbf{h}}$ from equation~\eqref{eq:holesmap} induce a ladder module: 

\begin{equation} \label{Eq:Diagram6pack}
    \begin{tikzcd}
 H^n(P_{X^0}^\mathbf{h};\Bbbk)  \arrow[r] & H^n(P_{X^1}^\mathbf{h};\Bbbk)  \arrow[r] & H^n(P_{X^2}^\mathbf{h};\Bbbk)  \arrow[r] & \dots \arrow[r] & H^n(P_{X^M}^\mathbf{h};\Bbbk)  \\
 H^n(X^0;\Bbbk) \arrow[u, "\varphi^{n,0,\mathbf{h}}"] \arrow[r] & H^n(X^1;\Bbbk) \arrow[u, "\varphi^{n,1,\mathbf{h}}"] \arrow[r] & H^n(X^2;\Bbbk) \arrow[u,"\varphi^{n,2,\mathbf{h}}"] \arrow[r] & \dots \arrow[r] & H^n(X^M;\Bbbk) \arrow[u,"\varphi^{n,M,\mathbf{h}}"]
    \end{tikzcd}
\end{equation}
Similar to the previous section, from diagram~\eqref{Eq:Diagram6pack} we get the following collection of one-parameter persistence modules:

\begin{itemize}
    \item The $n$-th persistence module of the cofiltration, on the bottom horizontal line,
    \begin{equation*}
        H^n(X^\bullet;\Bbbk)\,\colon \ H^n(X^0;\Bbbk)\to H^n(X^1;\Bbbk)\to \dots \to H^n(X^M;\Bbbk)\to 0.
    \end{equation*}
    \item The $n$-th persistence module of the cofiltration of $\mathbf{h}$-face posets, $P_{X^\bullet}^\mathbf{h}$, on the top horizontal line,
    \begin{equation*}
        H^n(P_{X^\bullet}^\mathbf{h};\Bbbk)\,\colon \ H^n(P_{X^0}^\mathbf{h};\Bbbk)\to H^n(P_{X^1}^\mathbf{h};\Bbbk)\to \dots \to H^n(P_{X^M}^\mathbf{h};\Bbbk)\to 0.
    \end{equation*}
    \item The image persistence module 
    \begin{equation*}
        \img \varphi^{n,\bullet,\mathbf{h}}\,\colon \ \img\varphi^{n,0,\mathbf{h}}\to \img\varphi^{n,1,\mathbf{h}}\to \dots \to \img\varphi^{n,M,\mathbf{h}}\to 0.
    \end{equation*}
    It tracks the evolution of the holes in the cofiltration $X^\bullet$ defined by simplices whose dimension is in $\mathbf{h}$.
    
    \item The kernel persistence module  
    \begin{equation*}
        \ker \varphi^{n,\bullet,\mathbf{h}}\,\colon \ \ker\varphi^{n,0,\mathbf{h}}\to \ker\varphi^{n,1,\mathbf{h}}\to \dots \to \ker\varphi^{n,M,\mathbf{h}}\to 0.
    \end{equation*}
    It records the evolution throughout the cofiltration $X^\bullet$ of the holes that disappear when removing simplices whose dimension is not in $\mathbf{h}$ (the less cohesive ones).
    \item The cokernel persistence module 
    \begin{equation*}
         \coker \varphi^{n,\bullet,\mathbf{h}}\,\colon \ \coker\varphi^{n,0,\mathbf{h}}\to \coker\varphi^{n,1,\mathbf{h}}\to \dots \to \coker\varphi^{n,M,\mathbf{h}}\to 0.
    \end{equation*}
    It contains the dynamics of the holes that are created by the elimination of simplices. 
\end{itemize}

\begin{example}\label{Ex:6Pack}
Let us continue with the analysis of the cofiltration in Example~\ref{Ex:PersistentThick} as previously pointed out. We adopt the viewpoint of the barycentric subdivision (see equation~\eqref{e:BS}) and reinterpret the degeneration cofiltration presented in Example~\ref{Ex:PersistentThick} accordingly (see Figure~\ref{Fig:FiltrationPersistentCohesive}a).
    
Figure~\ref{Fig:FiltrationPersistentCohesive}d shows the persistence barcodes associated with the $\{0,2\}$-face poset cofiltration. We observe that many bars appear in $H^1(P_{X^{\bullet}}^{\{0,2\}};\Z_2)$. The cofiltration defined by the order complexes of the $\{0,2\}$-face posets (Figure~\ref{Fig:FiltrationPersistentCohesive}b) illustrates the persistence of such bars. Moreover, comparing with $\mathcal{K}(X^\bullet)$ (Figure~\ref{Fig:FiltrationPersistentCohesive}a), we can see that most of the bars in $H^1(P_{X^{\bullet}}^{\{0,2\}};\Z_2)$ correspond to holes that are created by edge removal. From an algebraic point of view, the persistence of such holes is given by the cokernel persistence barcode $\coker \varphi^{1,\bullet,\{0,2\}}$ (Figure~\ref{Fig:FiltrationPersistentCohesive}h). To focus on the persistence of the hole in $X^\bullet$ when edges are removed, we study the image persistence module $\img \varphi^{1,\bullet,\{0,2\}}$ (Figure~\ref{Fig:FiltrationPersistentCohesive}f). As its barcode shows, in this example it captures the same information as the $2$-coskeleton cofiltration barcode (Figure~\ref{Fig:FiltrationPersistentCosk}d), reflecting that the hole in $X^\bullet$ is no longer fully surrounded by triangles in stage 4.

 On the other hand, the persistence barcode $\img \varphi^{1,\bullet,\{1,2\}}$ (Figure~\ref{Fig:FiltrationPersistentCohesive}f) consists of the interval $[0,1)$, meaning that the hole in $X^\bullet$ is only enclosed by a closed $(1,2)$-walk in $X^0$. Thus, the study of the cohesion of the network detects that the deterioration affects the hole from the very first stage. Further information is provided by the intervals in $\coker \varphi^{0,\bullet,\{1,2\}}$ (Figure~\ref{Fig:FiltrationPersistentCohesive}i). When we remove the vertices, two $(1,2)$-connected components are born in step $2$ and one more is born in step $3$, although neither connectivity nor $(0,2)$-connectivity are affected along the original cofiltration (Figure~\ref{Fig:FiltrationPersistentCosk}c and Figure~\ref{Fig:FiltrationPersistentCohesive}d). This points to an increase of critical vertices in the network. 
\end{example}

\begin{figure}[!htb]
    \centering
    \begin{subfigure}{0.9\textwidth}
        \includegraphics[width=\textwidth,valign=t]{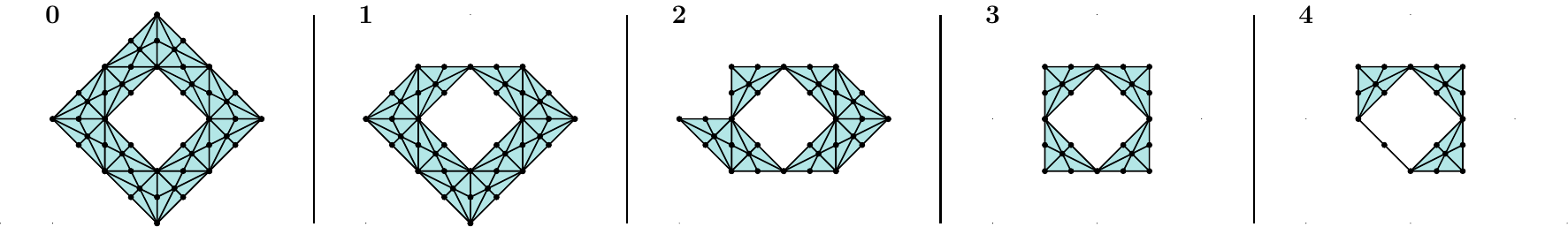}
        \caption{Cofiltration $\mathcal{K}(X^\bullet)$}\label{Fig:FiltrationPersistentCohesiveA}
    \end{subfigure}

    \begin{subfigure}{0.9\textwidth}
        \includegraphics[width=\textwidth,valign=t]{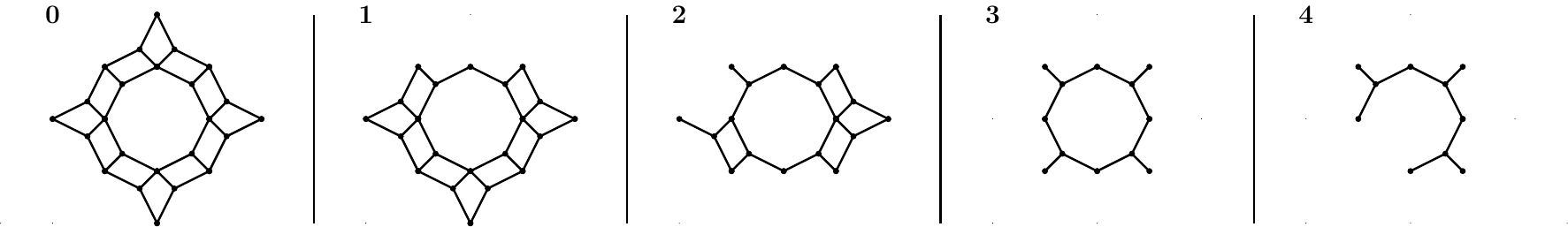}
        \caption{Cofiltration $\mathcal{K}(P_{X^\bullet}^{\{0,2\}})$}\label{Fig:FiltrationPersistentCohesiveB}
    \end{subfigure}
    
    \begin{subfigure}{0.9\textwidth}
        \includegraphics[width=\textwidth,valign=t]{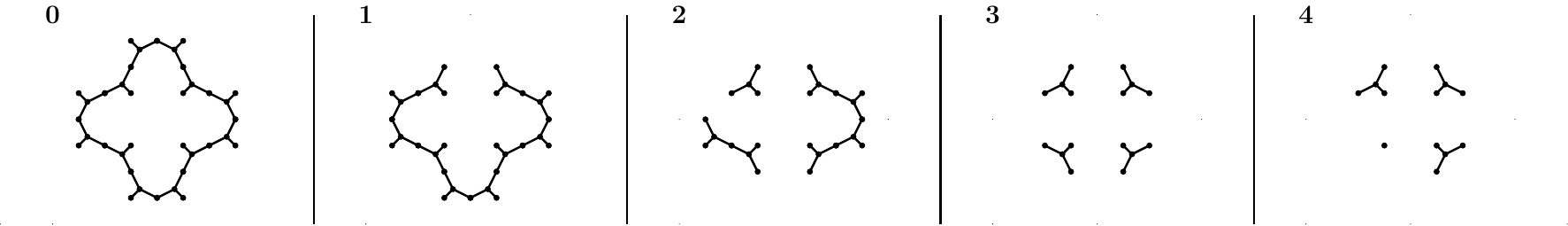}
        \caption{Cofiltration $\mathcal{K}(P_{X^\bullet}^{\{1,2\}})$}\label{Fig:FiltrationPersistentCohesiveC}
    \end{subfigure}

    \begin{minipage}[b]{0.3\textwidth}
        \centering
        \includegraphics[width=\textwidth,valign=t]{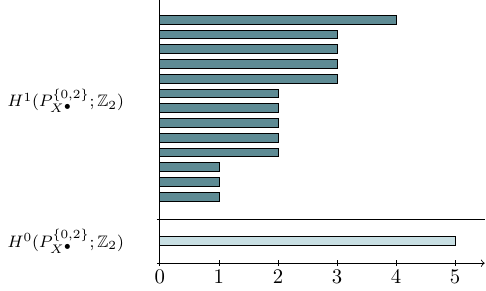}
        \subcaption{\small Barcodes of the $\{0,2\}$-face poset cofiltration}\label{Fig:FiltrationPersistentCohesiveD}
    \end{minipage}
    \hfill
    \begin{minipage}[b]{0.3\textwidth}
        \centering
        \includegraphics[width=\textwidth,valign=t]{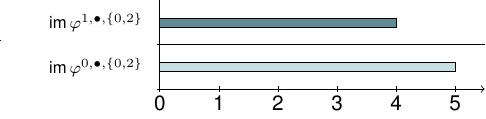}
        \subcaption{\small Barcodes of the $\{0,2\}$-image persistence modules}\label{Fig:FiltrationPersistentCohesiveF}
    \end{minipage}
    \hfill
    \begin{minipage}[b]{0.3\textwidth}
        \centering
        \includegraphics[width=\textwidth,valign=t]{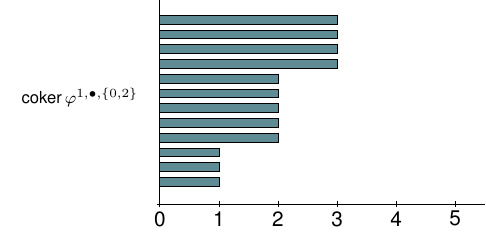}
        \subcaption{\small Barcode of the $\{0,2\}$-cokernel persistence modules}
    \end{minipage}

    \vspace{1em} 
    
    \begin{minipage}[b]{0.3\textwidth}
        \centering
        \includegraphics[width=\textwidth,valign=t]{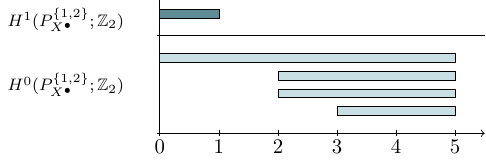}
        \subcaption{\small Barcodes of the $\{1,2\}$-face poset cofiltration}\label{Fig:FiltrationPersistentCohesiveE}
    \end{minipage}
    \hfill
    \begin{minipage}[b]{0.3\textwidth}
        \centering
        \includegraphics[width=\textwidth,valign=t]{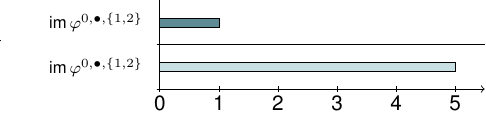}
        \subcaption{\small Barcodes of the $\{1,2\}$-image persistence modules}\label{Fig:FiltrationPersistentCohesiveG}
    \end{minipage}
    \hfill
    \begin{minipage}[b]{0.3\textwidth}
        \centering
        \includegraphics[width=\textwidth,valign=t]{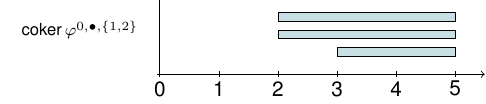}
        \subcaption{\small Barcode of the $\{1,2\}$-cokernel persistence modules}
    \end{minipage}
    \caption{Barycentric subdivision cofiltrations and persistence barcodes}\label{Fig:FiltrationPersistentCohesive}
\end{figure}

\begin{remark}\label{Rem:Software6pack}
\cite{CohenSteiner_Otros09} developed algorithms to compute the barcodes of the image, kernel, and cokernel persistence modules arising from an inclusion of simplicial filtrations. However, public implementations remain scarce. Among the available tools,~\cite{Bauer_Schmahl23} integrated an algorithm for image persistence into the Ripser framework~\cite{Bauer21}, enabling efficient barcode computation for an inclusion of Vietoris-Rips filtrations. In addition, the \texttt{PerMaViss} package~\cite{Torras20} allows for the computation of images and kernels of morphisms of persistence modules. As another example,~\cite{Cultera_Otros24} provide implementations to compute image, kernel, and cokernel persistence diagrams associated with chromatic alpha complexes.
\end{remark}

\section{Stability results}\label{Sec:Stability}

Given a point cloud or, more generally, a network~\cite{Memoli_Chowdhury18, Carlsson_Otros14}, a standard strategy in TDA is to capture its topological structure using the Vietoris-Rips filtration. In the present setting, it is convenient to use the equivalent cofiltration viewpoint. Let $\mathfrak{N}$ denote the collection of all networks, where a network $(X,\omega_X)$ consists of a finite set $X$ equipped with a function $\omega_X\colon X\times X\to \mathbb{R}$. For each $\varepsilon\in \mathbb{R}$, we define
\[
\mathcal{VR}^\varepsilon(X)\coloneqq \{\emptyset\neq\sigma\subseteq X : \max_{x,x'\in \sigma}\omega_X(x,x')\leq -\varepsilon\}\,.
\]
The negative parameter turns the usual growing Vietoris-Rips filtration into a decreasing process, which we interpret as progressive degradation. The goal of this section is to study the stability of the thickness and cohesiveness resilience analysis for Vietoris-Rips cofiltrations associated with finite networks.

The persistence modules arising from Vietoris-Rips cofiltrations will be compared using the bottleneck distance, denoted by $d_B$~\cite{Chazal_Otros16}. To quantify the dissimilarity between the underlying networks, a standard approach is to use the network distance, which naturally generalizes the classical Gromov-Hausdorff distance to the setting of arbitrary weight functions~\cite{Memoli_Chowdhury18}. 

\begin{definition}
    Let $(X,\omega_X),\,(Y,\omega_Y)\in\mathfrak{N}$. A \emph{correspondence} between $X$ and $Y$ is a subset $C\subseteq X\times Y$ such that $\pi_X(C)=X$ and $\pi_Y(C)=Y$, where $\pi_X\colon X\times Y\to X$ and $\pi_Y\colon X\times Y\to Y$ are the natural projections. The set of correspondences between $X$ and $Y$ will be denoted by $\mathfrak{C}(X,Y)$. The \emph{distortion} of a correspondence $C\in \mathfrak{C}(X,Y)$ is
\[
\operatorname{dis}(C)\coloneqq \max_{(x,y),(x',y')\in C} \left| \omega_X(x,x')-\omega_Y(y,y')\right| \,.
\]
The \emph{network distance} $d_{\mathfrak{N}}\colon \mathfrak{N}\times \mathfrak{N}\to \mathbb{R}_{\geq 0}$ is defined as: 
\[
d_{\mathfrak{N}}(X,Y)\coloneqq \frac{1}{2}\min_{C\in\mathfrak{C}(X,Y)}\operatorname{dis}(C)\,.
\]
\end{definition}

\subsection{Stability theorems for the study of thickness}

Given a network $(X,\omega_X)$, the $q$-coskeleta of the complexes in the Vietoris-Rips cofiltration of $(X,\omega_X)$ form a cofiltration, $\mathcal{VR}^\bullet(X)^q$. We now study the stability of the persistent cohomology modules associated with these coskeletal cofiltrations:
\[
\{H^n(\mathcal{VR}^\bullet(X)^q;\Bbbk)\}_{n\geq 0}.
\]
A first observation is that the persistent cohomology of the cofiltration of $q$-coskeleta of Vietoris-Rips complexes is not stable, in general, with respect to the network distance, as the following example shows.

\begin{example}\label{Ex:InestabilidadGHGruesos}
Let $(X,\omega_X)$ be a network consisting of a single point $x$ such that $\omega_X(x,x)=0$, and let $(Y,\omega_Y)$ be a network consisting of two points, $y,y'$, such that $\omega_Y(y,y)=\omega_Y(y',y')=0$ and $\omega_Y(y,y')=\omega_Y(y',y)=2$. The only correspondence between $X$ and $Y$ is $C=\{(x,y),(x,y')\}$, and therefore
\[
d_{\mathfrak{N}}(X,Y)=\frac{1}{2}\operatorname{dis}(C)=\frac{1}{2}\left|\omega_X(x,x)-\omega_Y(y,y')\right|=1\, .
\]
The Vietoris-Rips complexes $\mathcal{VR}^\varepsilon(X)$ consist of a single point for every $\varepsilon\leq 0$. Hence, their $1$-coskeleta are all empty and $H^0(\mathcal{VR}^\bullet(X)^1;\Bbbk)$ is the zero persistence module.

On the other hand, the Vietoris-Rips complexes $\mathcal{VR}^\varepsilon(Y)$ consist of two isolated vertices for $-2<\varepsilon \leq 0$ and of an edge for every $\varepsilon\leq -2$. Thus, the cofiltration of $1$-coskeleta $\mathcal{VR}^\bullet(Y)^1$ consists of a single edge for $\varepsilon\leq -2$ and is empty for $\varepsilon>-2$. Consequently,
\[
H^0(\mathcal{VR}^\bullet(Y)^1;\Bbbk)=\Bbbk(-\infty,-2]\,.
\]
The bottleneck distance between both persistence modules is infinite, and therefore larger than the distance between the original networks:
\[
d_B(H^0(\mathcal{VR}^\bullet(X)^1;\Bbbk),H^0(\mathcal{VR}^\bullet(Y)^1;\Bbbk))=\infty > 1 =d_{\mathfrak{N}}(X,Y)\, .
\]
\end{example}

To establish a stability theorem for coskeletal cofiltrations, we require an alternative notion of distance. Since the coskeletal cofiltration is sensitive to the total number of vertices of the network, as shown by the previous example, we restrict the comparison to networks of the same cardinality. To this end, we use the \emph{second network distance}~\cite{Chodhury_Memoli23}, defined as follows:

\begin{definition}\index{Second network distance}
Given two networks $(X,\omega_X)$ and $(Y,\omega_Y)$ with the same cardinality, define
\[
\widehat d_{\mathfrak{N}}(X,Y)\coloneqq \frac{1}{2}\min_{\phi}\max_{x,x'\in X}\left| \omega_X(x,x')-\omega_Y(\phi(x),\phi(x'))\right| \, ,
\]
where $\phi\colon X\to Y$ ranges over all bijections from $X$ to $Y$.
\end{definition}

\begin{remark}
To simplify the notation, in what follows we omit the coefficient field $\Bbbk$ with respect to which cohomology is taken, writing $H^n\mathcal{VR}^\bullet(X)^q$ instead of $H^n(\mathcal{VR}^\bullet(X)^q;\Bbbk)$.
\end{remark}

\begin{theorem}\label{Thm:EstabilidadGruesos}
Let $(X,\omega_X)$ and $(Y,\omega_Y)$ be two networks with the same cardinality. For every $q\geq 0$ and every $n\geq 0$, one has 
\[
d_{B}\left(H^n\mathcal{VR}^\bullet(X)^q,H^n\mathcal{VR}^\bullet(Y)^q\right)\leq 2\,\widehat d_{\mathfrak{N}}(X,Y)\, .
\]
\end{theorem}
\begin{proof}
The proof is deferred to Appendix~\ref{proof:Thm-EstabilidadGruesos}.
\end{proof}

On the other hand, the inclusions $\mathcal{VR}^\varepsilon(X)^q\subseteq \mathcal{VR}^\varepsilon(X)$ induce, for each $n\geq 0$, a morphism of persistence modules
\[
\varphi^{n,\bullet,q}\colon H^n\mathcal{VR}^\bullet(X)\longrightarrow H^n\mathcal{VR}^\bullet(X)^q\,.
\]
The following result guarantees the stability of the persistence modules defined by the images, kernels, and cokernels of $\varphi^{n,\bullet,q}$.

\begin{theorem}\label{Thm:EstabilidadPersistenciaGrosor}
Let $(X,\omega_X),(Y,\omega_Y)\in \mathfrak{N}$ be networks with the same cardinality. Then the following inequalities hold:
\[
\begin{gathered}
    d_B\left(\img\varphi^{n,\bullet,q}_X,\,\img\varphi_Y^{n,\bullet,q}\right) \leq 2\,\widehat d_{\mathfrak{N}}(X,Y)\,,\\[1.5ex]
    d_B\left(\ker\varphi^{n,\bullet,q}_X,\,\ker\varphi_Y^{n,\bullet,q}\right) \leq 2\,\widehat d_{\mathfrak{N}}(X,Y)\,,\\[1.5ex]
    d_B\left(\coker\varphi^{n,\bullet,q}_X,\,\coker\varphi_Y^{n,\bullet,q}\right) \leq 2\,\widehat d_{\mathfrak{N}}(X,Y)\, .
\end{gathered}
\]
\end{theorem}
\begin{proof}
The proof is given in Appendix~\ref{proof:Thm-EstabilidadPersistenciaGrosor}.
\end{proof}

\subsection{Stability theorems for the study of cohesiveness}

Given a network $(X,\omega_X)$, the $\mathbf{h}$-face posets of the complexes in the Vietoris-Rips cofiltration $\mathcal{VR}^\bullet(X)$ define a cofiltration of finite posets indexed by $\mathbb{R}$, $\{P_{\mathcal{VR}^\varepsilon(X)}^{\mathbf{h}}\}_{\varepsilon\in \mathbb{R}}$. By composing with the $n$-th poset cohomology functor with coefficients in $\Bbbk$, we obtain a p.f.d.\ persistence module which will write as $H^n P_{\mathcal{VR}^\bullet(X)}^{\mathbf{h}}$. In general, however, this persistence module is not stable with respect to the network distance, as the following example illustrates. 

\begin{example}
Consider the networks $(X,\omega_X)$ and $(Y,\omega_Y)$ from Example~\ref{Ex:InestabilidadGHGruesos}. The cofiltration $P_{\mathcal{VR}^\bullet(X)}^{\{1\}}$ is empty, while $P_{\mathcal{VR}^\bullet(Y)}^{\{1\}}$ consists of a single point for $\varepsilon\leq -2$ and is empty for $\varepsilon>-2$. Hence,
\[
H^0 P_{\mathcal{VR}^{\bullet}(X)}^{\{1\}}=0\quad \text{ and }\quad H^0P_{\mathcal{VR}^\bullet(Y)}^{\{1\}}\simeq\Bbbk(-\infty,-2]\,.
\]
Therefore, the bottleneck distance between the $\mathbf{h}$-cohesiveness persistence modules exceeds the distance between the networks:
\[
d_B\big(H^0 P_{\mathcal{VR}^{\bullet}(X)}^{\{1\}},\,H^0P_{\mathcal{VR}^{\bullet}(Y)}^{\{1\}}\big)=\infty> 1 =d_{\mathfrak{N}}(X,Y)\, .
\]
\end{example}

As in the case of thickness persistence modules, in order to prove stability theorems for cohesiveness persistence modules, we restrict the comparison to networks with the same cardinality and use the second network distance.

\begin{theorem}\label{Thm:EstabilidadCohesion}
Let $(X,\omega_X)$ and $(Y,\omega_Y)$ be two networks with the same cardinality. For every $\mathbf{h}$ and every $n\geq 0$, one has
\[
d_{B}\big(H^nP_{\mathcal{VR}^\bullet(X)}^{\mathbf{h}},\,H^nP_{\mathcal{VR}^\bullet(Y)}^{\mathbf{h}}\big)\leq 2\,\widehat d_\mathfrak{N}(X,Y)\,.
\]
\end{theorem}
\begin{proof}
The proof is given in Appendix~\ref{proof:Thm-EstabilidadCohesion}. 
\end{proof}

As shown by the following result, stability also holds for the image, kernel, and cokernel persistence modules induced by the morphisms of persistence modules
\[
\varphi^{n,\bullet,\mathbf{h}}\colon H^n P_{\mathcal{VR}^{\bullet}(X)} \longrightarrow H^n P_{\mathcal{VR}^{\bullet}(X)}^{\mathbf{h}}\, ,
\]
defined by the inclusions $P_{\mathcal{VR}^\varepsilon(X)}^{\mathbf{h}}\subseteq P_{\mathcal{VR}^\varepsilon(X)}$.

\begin{theorem}\label{Thm:EstabilidadPersistenciaCohesion}
Let $(X,\omega_X),(Y,\omega_Y)\in \mathfrak{N}$ be networks with the same cardinality. Then the following inequalities hold:
\[
\begin{gathered}
    d_B\big(\img\varphi^{n,\bullet,\mathbf{h}}_X,\img\varphi_Y^{n,\bullet,\mathbf{h}}\big) \leq 2\,\widehat d_{\mathfrak{N}}(X,Y)\,,\\[1.5ex]
    d_B\big(\ker\varphi^{n,\bullet,\mathbf{h}}_X,\ker\varphi_Y^{n,\bullet,\mathbf{h}}\big) \leq 2\,\widehat d_{\mathfrak{N}}(X,Y)\,,\\[1.5ex]
    d_B\big(\coker\varphi^{n,\bullet,\mathbf{h}}_X,\coker\varphi_Y^{n,\bullet,\mathbf{h}}\big) \leq 2\,\widehat d_{\mathfrak{N}}(X,Y)\, .
\end{gathered}
\]
\end{theorem}
\begin{proof}
The proof is given in Appendix~\ref{proof:Thm_EstabilidadPersistenciaCohesion}. 
\end{proof}

\section{Conclusion}

In this work, we have introduced a persistence framework for analyzing robustness in finite simplicial complexes beyond ordinary Betti numbers. The central idea is that, in higher-order networks, the relevance of a homological feature is not exhausted by its existence or by its persistence along a filtration. It is also important to understand how that feature is supported by the simplicial structure: whether it is sustained by high-dimensional simplices, and whether the simplices involved remain connected through strong higher-order adjacencies. The examples in Figure~\ref{Fig:Fig1Summary} summarize this point: the complexes have the same homotopy type, but the refined invariants introduced here distinguish structural differences that ordinary Betti numbers do not detect.

\begin{figure}[htb!]
\centering
\includegraphics[width=0.9\linewidth]{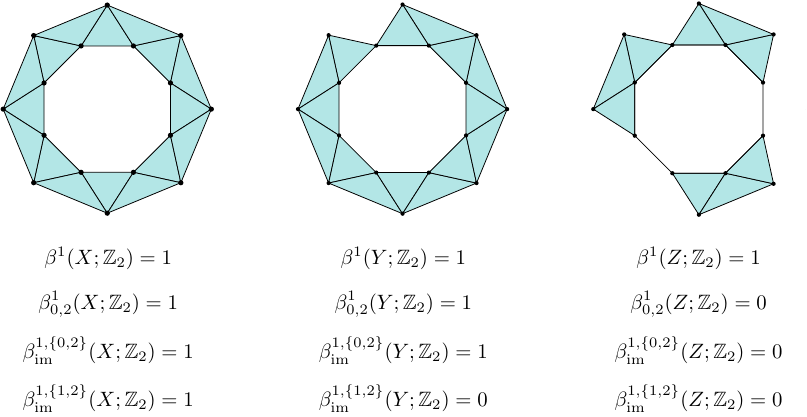}
\caption{Simplicial complexes with the same homotopy type but different higher-order support structures. The refined Betti numbers introduced in this work distinguish structural features that are invisible to ordinary Betti numbers} \label{Fig:Fig1Summary}
\end{figure}

The first family of invariants, the thick Betti numbers, measures the extent to which connected components and holes are supported by simplices of sufficiently high dimension. This is achieved through the cofiltration by coskeleta, which progressively removes those simplices that are not supported by higher-dimensional cofaces. The second family, the cohesive Betti numbers, measures how cohomology classes depend on higher-order adjacencies among simplices of prescribed dimensions. This construction is naturally formulated using dimension-selected face posets. Together, these two families provide complementary ways of refining classical topological information: thickness records the dimensional quality of the support, while cohesiveness records the strength of the higher-order connections sustaining it.

We have also shown how these invariants can be incorporated into a persistence study of degradation processes. Given an attack cofiltration representing a prescribed removal of simplices, one can combine the attack parameter with either the thickness or the cohesiveness parameter, which produces biparameter persistence modules. To extract interpretable one-parameter information from them, we fix one structural parameter and study the resulting ladder modules. Their associated image, kernel, and cokernel persistence modules separate features that remain thick or cohesive during the process, features that lose thickness or cohesion before disappearing, and features that are created by the degradation itself. This provides a precise persistent interpretation of resilience for higher-order cohomological features.

A further contribution is the stability analysis of the constructions. We identify an obstruction to stability with respect to the classical network distance. For networks with the same number of vertices, however, this difficulty is overcome by using the second network distance, for which we establish stability results. This provides a theoretical reliability guarantee for the proposed constructions.

The framework developed here is primarily mathematical, but it is motivated by contexts in which higher-order homological features have already proved informative. Persistent loops and voids have been used to distinguish cancer-specific gene regulatory networks from normal ones~\cite{Masoomy_Otros21}, and recent work relates $\beta_2$-structures in cancer consensus networks to driver or cancer-associated genes~\cite{Ramos_Otros25}. In neuroscience, persistent holes have been proposed as markers of abnormal metabolic connectivity in Alzheimer's disease~\cite{Lee_Chung_Kang_Lee14}, and homological scaffolds have been used to describe mesoscopic cycles in functional brain networks~\cite{Petri_Otros14}. These works suggest that future applications may benefit from evaluating not only the persistence of homological features, but also the robustness of their higher-order support. The thick and cohesive Betti numbers introduced here provide a theoretical basis for this analysis, while domain-specific pipelines and empirical validation are left for future work.

\section*{Acknowledgments} 
The authors would like to thank the referees for their careful reading of the manuscript. Their feedback has helped us to clarify several aspects of the paper, improve the exposition, and better contextualize the proposed constructions. The authors would also like to thank Carles Casacuberta, Rub\'en Ballester, and Aina Ferr\`a for their valuable comments and suggestions. This work is supported by Spanish National Grant PID2021-128665NB-I00 funded by MCIN/AEI/10.13039/501100011033 and, as appropriate, by ``ERDF A way of making Europe''; and also by project STAMGAD 18.J445/463AC03 by Consejer\'ia de Educaci\'on (GIR, Junta de Castilla y Le\'on, Spain).  Pablo Hern\'andez-Garc\'ia also acknowledges financial support from the USAL 2022 call for predoctoral contracts, co-financed by Banco Santander.

\bibliographystyle{siam}
\bibliography{bibliography.bib}

\appendix
\section{}\label{app:demos}

We give in this appendix the proofs of several results stated throughout the paper. First, let us recall that any simplicial map $f\colon X\to Y$ induces linear maps  
\begin{equation*}
        {f^n}\colon C^n(Y;\Bbbk)\to C^n(X;\Bbbk),
    \end{equation*}
 defined, for each $y\in C^n(Y;\Bbbk)$ and each ordered $n$-simplex $\sigma=({i_0},\dots,{i_n})\in X$, as
\begin{equation}\label{Eq:DefMorfismosEnCohomologia}
{f^n}(y)_{({i_0},\dots,{i_n})}= \begin{cases}
(-1)^{|\epsilon_\sigma|}y_{\epsilon_\sigma\cdot f(\sigma)} & \text{ if }f({i_j})\neq f({i_k})\, \forall\, i_j\neq i_k,\\
0 & \text{ otherwise,}
\end{cases}
\end{equation}
where $\epsilon_\sigma$ is the permutation on the indices $\{0,\dots, n\}$ such that (the indices of) the vertices of $f(\sigma)$ appear according to the ordering in $Y$, that is, and abusing notation, $f({i_{\epsilon(0)}})<\cdots< f({i_{\epsilon(n)}})$. We denote by $\epsilon_\sigma\cdot f(\sigma)$ the $n$-simplex $(f({i_{\epsilon(0)}}),\dots, f({i_{\epsilon(n)}}))$ and $(-1)^{|\epsilon_\sigma|}$ denotes the sign of $\epsilon_\sigma$.

These linear maps satisfy ${f^{n+1}}\circ \delta^n=\delta^n\circ {f^n}$, where $\delta^n\colon C^n(X;\Bbbk)\to C^{n+1}(X;\Bbbk)$ is the coboundary operator, so they induce morphisms between the cohomology spaces
\begin{equation*}
\begin{array}{rccc}
H^n(f)\colon & H^n(Y;\Bbbk)&\longrightarrow & H^n(X;\Bbbk)\\
& [y] & \longmapsto & [{f^n}(y)]
\end{array}
\end{equation*}
The above assignments are functorial, which means that they are compatible with identity and composition: $H^n(\mathrm{Id})=\mathrm{Id}$ and $H^n(g\circ f)=H^n(f)\circ H^n(g)$.

\subsection{Proofs of Section~\ref{Sec:Coskeletal}}\label{app:Sec-Coskeletal}

\subsubsection{Proof of Proposition~\ref{Prop:Coskeletal0h}}\label{proof:Prop-Coskeletal0h}
Recall that $\beta^{0,q}(X;\Bbbk)$ is the number of connected components of $X^q$. That is, 
        \begin{equation}\label{Eq:XhCompConexas}
            \beta^{0,q}(X;\Bbbk)=\#\faktor{V(X^q)}{\sim_{01}}.
        \end{equation}
        Given a walk $\{e_1,e_2,\dots,e_r\}$ of $0$-lower adjacent $1$-simplices in $X^q$, by definition of the $q$-coskeleton, there exists a $(0,q)$-walk $\{\sigma_1,\sigma_2,\dots,\sigma_r\}$ in $X^q$ such that $e_i\face \sigma_i$ for each $i$. Consequently, \begin{equation*}
            \#\faktor{V(X^q)}{\sim_{01}}=\#\faktor{V(X^q)}{\sim_{0q}}.
        \end{equation*}
        The equivalence classes of the last quotient correspond to the non-singleton equivalence classes of $V(X)/\sim_{0q}$, which yields the first formula in the proposition. 

        To establish the second formula, by equation~\eqref{Eq:XhCompConexas}, it is enough to construct a bijection between $V(X^q)/\sim_{01}$ and $S^{\geq q}(X)/\sim_{0q}$. For any $[v]\in V(X^q)/\sim_{01}$, the definition of the $q$-coskeleton ensures the existence of a simplex $\sigma\in S^{\geq q}(X)$ such that $v\in \sigma$. Following this notation, we define the map:
        \[
        \begin{array}{ccc}
            \faktor{V(X^q)}{\sim_{01}} &\longrightarrow & \faktor{S^{\geq q}(X)}{\sim_{0q}}\\[1.5ex]
            {} [v] &\longmapsto & {} [\sigma]
        \end{array}
        \]
        This assignment is well-defined, as it is independent of both the chosen representative $v \in [v]$ and the simplex $\sigma$ containing $v$.

        Conversely, given $[\sigma]\in S^{\geq q}(X)/\sim_{0q}$, let $v$ be a vertex of $\sigma$. Then $v\in X^q$, and we define the map:
        \[
        \begin{array}{ccc}
            \faktor{S^{\geq q}(X)}{\sim_{0q}}& \longrightarrow &  \faktor{V(X^q)}{\sim_{01}}\\[1.5ex]
            {} [\sigma] & \longmapsto & {} [v]
        \end{array}
        \]
        This map is also well-defined, depending neither on the representative $\sigma \in [\sigma]$ nor on the choice of $v\in \sigma$. These two maps are mutually inverse, establishing the desired bijection.
    
\subsubsection{Proof of Theorem~\ref{Prop:InvarianzaFunctoresCoesq}} 
\label{proof:Prop-InvarianzaFunctoresCoesq}
Let $f\colon X\stackrel{\sim}\to Y$ be a simplicial isomorphism. For each $q\geq 0$, $f$ restricts to an isomorphism $f^q\colon X^q\stackrel{\sim}\to Y^q$. Moreover, for each $q\leq q'$, these isomorphisms make the following diagram commutative:
\begin{equation*}
\begin{tikzcd}
X^{q'} \arrow[r, "\sim"] \arrow[d, phantom, sloped, "\subseteq"]& Y^{q'} \arrow[d, phantom, sloped, "\subseteq"]\\
X^{q} \arrow[r, "\sim"] & Y^q
\end{tikzcd}
\end{equation*}
Taking simplicial cohomology, we obtain a commutative diagram:
\begin{equation*}
\begin{tikzcd}
H^n(X^{q'};\Bbbk)  & H^n(Y^{q'};\Bbbk) \arrow[l,"\sim"'] \\
H^n(X^{q};\Bbbk) \arrow[u] & H^n(Y^{q};\Bbbk) \arrow[l, "\sim"'] \arrow[u]
\end{tikzcd}
\end{equation*}
This establishes the desired isomorphism between $H^n(X^\bullet;\Bbbk)$ and $H^n(Y^\bullet;\Bbbk)$.

\subsubsection{Proof of Proposition \ref{Prop:aibiValues}}\label{proof:Prop-aibiValues}
On the one hand, $H^n(X^q;\Bbbk)$ is defined by the coboundary maps
\begin{equation}\label{Eq:CobordeXh}
C^{n-1}(X^{q};\Bbbk)\stackrel{\delta^{n-1}}\longrightarrow C^{n}(X^{q};\Bbbk)\stackrel{\delta^n}\longrightarrow C^{n+1}(X^{q};\Bbbk).
\end{equation}
On the other hand, $H^n(X;\Bbbk)$ is defined by the coboundary maps
\begin{equation}\label{Eq:CobordeX}
C^{n-1}(X;\Bbbk) \stackrel{\delta^{n-1}}\longrightarrow C^{n}(X;\Bbbk) \stackrel{\delta^n}\longrightarrow C^{n+1}(X;\Bbbk).
\end{equation}
By definition, the sequences \eqref{Eq:CobordeXh} and \eqref{Eq:CobordeX} agree when $q<n$, so $H^n(X^q;\Bbbk)\to H^n(X;\Bbbk)$ is the identity. In the case $q=n$, the natural inclusion $i\colon X^{n}\hookrightarrow X$ induces a commutative diagram (equation~\eqref{Eq:DefMorfismosEnCohomologia}): 
\begin{equation*}
\begin{tikzcd}
C^{n-1}(X;\Bbbk)\arrow[r,"\delta^{n-1}"]\arrow[d,twoheadrightarrow,"{i^{n-1}}"] & C^n(X;\Bbbk) \arrow[r,"\delta^n"]\arrow[d,equal,"{i^n}"] & C^{n+1}(X;\Bbbk)\arrow[d,equal,"{i^{n+1}}"] \\
C^{n-1}(X^{n};\Bbbk)\arrow[r,"\delta^{n-1}"] & C^n(X^{n};\Bbbk) \arrow[r,"\delta^n"] & C^{n+1}(X^{n};\Bbbk)
\end{tikzcd}
\end{equation*}
Moreover, $\delta^{n-1}\colon C^{n-1}(X;\Bbbk)\longrightarrow C^n(X;\Bbbk)$ vanishes on $C^{n-1}(X;\Bbbk)\smallsetminus C^{n-1}(X^n;\Bbbk)$. Therefore, $\img\,\delta^{n-1}_X=\img\,\delta^{n-1}_{X^n}$ and hence $H^n(X;\Bbbk)= H^n(X^n;\Bbbk)$, with $H^n(i)$ being the identity map.

For the second part of the proposition, we have the commutative diagram:
\begin{equation*}
\begin{tikzcd}
C^{n-1}(X;\Bbbk)\arrow[r,"\delta^{n-1}"]\arrow[d,twoheadrightarrow,"{i^{n-1}}"] & C^n(X;\Bbbk) \arrow[r,"\delta^n"]\arrow[d,twoheadrightarrow,"{i^n}"] & C^{n+1}(X;\Bbbk)\arrow[d,equal,"{i^{n+1}}"] \\
C^{n-1}(X^{n+1};\Bbbk)\arrow[r,"\delta^{n-1}"] & C^n(X^{n+1};\Bbbk) \arrow[r,"\delta^n"] & C^{n+1}(X^{n+1};\Bbbk)
\end{tikzcd}
\end{equation*}
where the linear map ${i^n}$ is surjective and ${i^{n+1}}$ is the identity map. Thus, given $[x]\in H^n(X^{n+1};\Bbbk)$ and taking any $y\in ({i^n})^{-1}(x)$, by the commutativity of the second square, it follows that $y\in \ker\,\delta^n$ and $H^n(i)([y])=[{i^{n}}(y)]=[x]$. Then, we conclude that the induced morphism $H^n(i)\colon H^n(X;\Bbbk)\longrightarrow H^n(X^{n+1};\Bbbk)$ is surjective.

\subsection{Proofs of Section~\ref{Sec:Stratified}}\label{app:Sec-Stratified}

\subsubsection{Proof of Proposition~\ref{Prop:StratifiedCaminos}}\label{proof:Prop-StratifiedCaminos}
The $0$-th cohomology space $H^0(P_X^\mathbf{h};\Bbbk)$ is defined as the kernel of the coboundary map $\delta^0\colon C^0(\mathcal{K}(P_X^\mathbf{h});\Bbbk)\to C^1(\mathcal{K}(P_X^\mathbf{h});\Bbbk)$. Then, 
\begin{equation*}
H^0(P_X^\mathbf{h};\Bbbk)=\{x\in \prod_{\substack{\sigma\in X\\ \dim \,\sigma\in\mathbf{h}}}\Bbbk \ : x_\sigma=x_\tau \ \ \forall\,\sigma\face\tau \in X : \dim\sigma\,,\dim\tau\in \mathbf{h}\}.
\end{equation*}

In particular, if $x\in H^0(P_X^\mathbf{h};\Bbbk)$ it follows that $x_{\sigma}=x_{\sigma'}$ for every $\sigma,\sigma'$ of dimension $h_0$ such that $\sigma,\sigma'\face \tau$ for some simplex $\tau$ of dimension greater than or equal to $h_1$. Indeed, if $\sigma,\sigma'\face \tau$ then there is a sequence $\{\tau_0,\tau_1,\dots,\tau_r\}$ of $h_1$-dimensional faces of $\tau$ and a sequence $\{\sigma_0,\sigma_1,\dots,\sigma_{r+1}\}$ of $h_0$-dimensional faces of $\tau$ such that
\begin{equation*}
\sigma=\sigma_0\face \tau_0\trianglerighteqslant\sigma_1\face \tau_1\trianglerighteqslant \sigma_2\face \dots \face \tau_{r}\trianglerighteqslant \sigma_{r+1}=\sigma'.
\end{equation*}
Therefore, $x_{\sigma}=x_{\sigma_1}=\dots=x_{\sigma_{r+1}}=x_{\sigma'}$. Thus, the natural projection 
\begin{equation*}
\prod_{\substack{\sigma\in X\\ \dim \,\sigma\in\mathbf{h}}}\Bbbk \longrightarrow \prod_{\substack{\sigma\in X\\ \dim \,\sigma=h_0}}\Bbbk
\end{equation*}
induces a linear map $H^0(P_X^\mathbf{h};\Bbbk)\longrightarrow \Gamma^{h_0,h_1}(X;\Bbbk)$. 

Conversely, we can define a linear map $\Gamma^{h_0,h_1}(X;\Bbbk)\longrightarrow H^0(P_X^\mathbf{h};\Bbbk)$ by assigning to each $x\in \Gamma^{h_0,h_1}(X;\Bbbk)$ the vector $x'\in H^0(P_X^\mathbf{h};\Bbbk)$ such that $x'_\tau\coloneqq x_{\sigma}$ for any $h_0$-face $\sigma$ of $\tau$. This map is well defined in the sense that it does not depend on the selected $h_0$-face $\sigma$ of $\tau$, by the definition of $\Gamma^{h_0,h_1}(X;\Bbbk)$. 

The linear maps $H^0(P_X^\mathbf{h};\Bbbk)\longrightarrow \Gamma^{h_0,h_1}(X;\Bbbk)$ and $\Gamma^{h_0,h_1}(X;\Bbbk)\longrightarrow H^0(P_X^\mathbf{h};\Bbbk)$ are mutually inverse, and we conclude.

\subsubsection{Proof of Corollary~\ref{cor:StratifiedCaminos}}\label{proof:cor-StratifiedCaminos}
By Proposition~\ref{Prop:StratifiedCaminos} the dimension of $H^{0}(P_X^\mathbf{h};\Bbbk)$ coincides with the dimension of the space $\Gamma^{h_0,h_1}(X;\Bbbk)$, which corresponds to the number of equivalence classes on $S^{h_0}(X)/\sim_{h_0h_1}$.

\subsection{Proofs of Section~\ref{Sec:Stability}}\label{app:Sec-Stability}

Before proceeding with the proofs of the stability theorems, we briefly recall the definition of the interleaving distance, as well as the Isometry Theorem that relates it to the bottleneck distance, which will be fundamental for our arguments.

\begin{definition}\index{Interleaving distance}\label{Def:Interleaving}
    Given $\varepsilon\geq 0$, two persistence modules $\mathcal{M},\mathcal{N}$ indexed by the reals are said to be \emph{$\varepsilon$-interleaved} if there exist families of linear maps $\{\phi_s\colon \mathcal{M}_s\to \mathcal{N}_{s+\varepsilon}\}_{s\in \mathbb{R}}$ and $\{\psi_s\colon \mathcal{N}_s\to \mathcal{M}_{s+\varepsilon}\}_{s\in \mathbb{R}}$ such that, for every $s\leq t$, the following diagrams commute:
    \[
\begin{gathered}
    \begin{tikzcd}[column sep=8ex, row sep=6ex]
        \mathcal{M}_s \arrow[r,"\mathcal{M}_{s\leq t}"] \arrow[d,"\phi_s"] 
        & \mathcal{M}_t \arrow[d,"\phi_t"] \\
        \mathcal{N}_{s+\varepsilon}\arrow[r, "\mathcal{N}_{s+\varepsilon\leq t+\varepsilon}"] 
        & \mathcal{N}_{t+\varepsilon}
    \end{tikzcd}
    \qquad 
    \begin{tikzcd}[column sep=4ex, row sep=6ex]
        \mathcal{M}_{s-\varepsilon} \arrow[rd, "\phi_{s-\varepsilon}"'] \arrow[rr,"\mathcal{M}_{s-\varepsilon\leq s+\varepsilon}"] 
        & & \mathcal{M}_{s+\varepsilon} \\
        & \mathcal{N}_s \arrow[ru,"\psi_s"'] &
    \end{tikzcd}
    \\[2ex] 
    \begin{tikzcd}[column sep=8ex, row sep=6ex]
        \mathcal{N}_s \arrow[r,"\mathcal{N}_{s\leq t}"] \arrow[d,"\psi_s"] 
        & \mathcal{N}_t \arrow[d,"\psi_t"] \\
        \mathcal{M}_{s+\varepsilon}\arrow[r, "\mathcal{M}_{s+\varepsilon\leq t+\varepsilon}"] 
        & \mathcal{M}_{t+\varepsilon}
    \end{tikzcd}
    \qquad 
    \begin{tikzcd}[column sep=4ex, row sep=6ex]
        \mathcal{N}_{s-\varepsilon} \arrow[rd, "\psi_{s-\varepsilon}"'] \arrow[rr,"\mathcal{N}_{s-\varepsilon\leq s+\varepsilon}"] 
        & & \mathcal{N}_{s+\varepsilon} \\
        & \mathcal{M}_s \arrow[ru,"\phi_s"'] &
    \end{tikzcd}
\end{gathered}
\]
The \emph{interleaving distance} between $\mathcal{M}$ and $\mathcal{N}$ is defined as
\[
d_I(\mathcal{M},\mathcal{N})\coloneqq \inf\{\varepsilon\geq 0 : \mathcal{M}\text{ and }\mathcal{N} \text{ are $\varepsilon$-interleaved}\}\,.
\]
\end{definition}

\begin{theorem}[Isometry Theorem, {\cite[Theorem 3.4]{Lesnick15}}]\label{Thm:isometria}\index{Isometry Theorem}
    The bottleneck and interleaving distances coincide for p.f.d.\ persistence modules indexed by $\mathbb{R}$. That is, if $\mathcal{M}$ and $\mathcal{N}$ are p.f.d.\ persistence modules, then
    \[
    d_B(\mathcal{M},\mathcal{N})=d_I(\mathcal{M},\mathcal{N})\,.
    \]
\end{theorem}

\subsubsection{Proof of Theorem \ref{Thm:EstabilidadGruesos}}
\label{proof:Thm-EstabilidadGruesos}
    Let $\delta=\widehat{d}_{\mathfrak{N}}(X,Y)$ and let $\phi\colon X\to Y$ be a bijection such that 
    \[
    \delta=\widehat{d}_{\mathfrak{N}}(X,Y)=\frac{1}{2} 
\max_{x,x'\in X}\left| \omega_X(x,x')-\omega_Y(\phi(x),\phi(x'))\right|\, .
    \]
    Then, for every $x,x'\in X$, we have 
    \[
    \omega_Y(\phi(x),\phi(x'))\leq \omega_X(x,x')+2\delta\,.
    \]
    Let $s\in \mathbb{R}$ and let $\sigma\in \mathcal{VR}^{s+2\delta}(X)$. Then $\phi(\sigma)$ is a simplex of $\mathcal{VR}^{s}(Y)$ since for all $x,x'\in \sigma$ it holds that
    \[
    \omega_Y(\phi(x),\phi(x'))\leq \omega_X(x,x')+2\delta\leq -s-2\delta+2\delta=-s\,.
    \]
    That is, for each $s\in \mathbb{R}$, the map $\phi$ defines a simplicial morphism
    \begin{equation*}
        \phi_s\colon \mathcal{VR}^{s+2\delta}(X)\longrightarrow\mathcal{VR}^{s}(Y)\,.
    \end{equation*}
    Analogously, the inverse bijection $\psi\colon Y\to X$ induces simplicial morphisms
\begin{equation*}
        \psi_s\colon \mathcal{VR}^{s+2\delta}(Y)\longrightarrow\mathcal{VR}^{s}(X)\,.
    \end{equation*}
These sets of morphisms make the following diagrams commute: 
\begin{equation}\label{Eq:DiagramasVR_Interleaving}
\begin{gathered}
    \begin{tikzcd}[column sep=6ex, row sep=5ex]
        \mathcal{VR}^{s}(X) & \mathcal{VR}^t(X) \arrow[l,"i_{s}^{t}"']\\
        \mathcal{VR}^{s+2\delta}(Y) \arrow[u,"\psi_s"']& \mathcal{VR}^{t+2\delta}(Y)\arrow[u,"\psi_t"'] \arrow[l, "i_{s+2\delta}^{t+2\delta}"']
    \end{tikzcd}
    \qquad 
    \begin{tikzcd}[column sep=3ex, row sep=5ex]
        \mathcal{VR}^{s-2\delta}(X) & & \mathcal{VR}^{s+2\delta}(X) \arrow[ll,"i_{s-2\delta}^{s+2\delta}"'] \arrow[ld, "\phi_s"]\\
        & \mathcal{VR}^{s}(Y) \arrow[lu,"\psi_{s-2\delta}"]
    \end{tikzcd}
    \\[4ex] 
    \begin{tikzcd}[column sep=6ex, row sep=5ex]
        \mathcal{VR}^{s}(Y)  & \mathcal{VR}^{t}(Y)\arrow[l,"i_{s}^{t}"']\\
        \mathcal{VR}^{s+2\delta}(X) \arrow[u,"\phi_s"']& \mathcal{VR}^{t+2\delta}(X)\arrow[u,"\phi_t"']\arrow[l, "i_{s+2\delta}^{t+2\delta}"']
    \end{tikzcd}
    \qquad 
    \begin{tikzcd}[column sep=3ex, row sep=5ex]
        \mathcal{VR}^{s-2\delta}(Y) & & \mathcal{VR}^{s+2\delta}(Y)\arrow[ll,"i_{s-2\delta}^{s+2\delta}"'] \arrow[ld, "\psi_{s}"]\\
        & \mathcal{VR}^{s}(X) \arrow[lu,"\phi_{s-2\delta}"] 
    \end{tikzcd}
\end{gathered}
\end{equation}
where $i_{s}^t$, $i_{s-2\delta}^{s+2\delta}$, and $i_{s+2\delta}^{t+2\delta}$ are the natural inclusions.

    Since all the morphisms forming the diagrams in equation~\eqref{Eq:DiagramasVR_Interleaving} are injective, they induce commutative diagrams between the $q$-coskeletons. Applying the degree $n$ simplicial cohomology functor to these diagrams yields the following commutative diagrams:
\[
\begin{gathered}
    \begin{tikzcd}[column sep=8ex, row sep=6ex, labels={font=\tiny}, nodes={font=\small}]
        H^n\mathcal{VR}^{s}(X)^q   
        & H^n\mathcal{VR}^t(X)^q  \arrow[l,"H^n(i_{s}^t)"', <-]\\
        H^n\mathcal{VR}^{s+2\delta}(Y)^q \arrow[u,"H^n(\psi_s^q)"', <-]   
        & H^n\mathcal{VR}^{t+2\delta}(Y)^q \arrow[u,"H^n(\psi_t^q)"', <-] \arrow[l, "H^n(i_{s+2\delta}^{t+2\delta})" ', <-] 
    \end{tikzcd}
    \quad 
    \begin{tikzcd}[column sep=-3ex, row sep=6ex, labels={font=\tiny}, nodes={font=\small}]
        H^n\mathcal{VR}^{s-2\delta}(X)^q    
        & & H^n\mathcal{VR}^{s+2\delta}(X)^q \arrow[ld,"H^n(\phi_s^q)", <-] \arrow[ll,"H^n(i_{s-2\delta}^{s+2\delta})" ', <-]\\
        & H^n\mathcal{VR}^{s}(Y)^q  \arrow[ul, "H^n(\psi_{s-2\delta}^q)", <-] &
    \end{tikzcd}
    \\[3.5ex] 
    \begin{tikzcd}[column sep=8ex, row sep=6ex, labels={font=\tiny}, nodes={font=\small}]
        H^n\mathcal{VR}^{s}(Y)^q    
        & H^n\mathcal{VR}^t(Y)^q \arrow[l,"H^n(i_{s}^t)"', <-] \\
        H^n\mathcal{VR}^{s+2\delta}(X)^q \arrow[u,"H^n(\phi_s^q)"', <-]
        & H^n\mathcal{VR}^{t+2\delta}(X)^q \arrow[u,"H^n(\phi_t^q)"', <-] \arrow[l, "H^n(i_{s+2\delta}^{t+2\delta})" ', <-] 
    \end{tikzcd}
    \quad 
    \begin{tikzcd}[column sep=-3ex, row sep=6ex, labels={font=\tiny}, nodes={font=\small}]
        H^n\mathcal{VR}^{s-2\delta}(Y)^q    
        & & H^n\mathcal{VR}^{s+2\delta}(Y)^q \arrow[ll,"H^n(i_{s-2\delta}^{s+2\delta})" ', <-] \arrow[ld,"H^n(\psi_s^q)", <-] \\
        & H^n\mathcal{VR}^{s}(X)^q   \arrow[ul, "H^n(\phi_{s-2\delta}^q)", <-]&
    \end{tikzcd}
\end{gathered}
\]
    That is, the persistence modules $H^n\mathcal{VR}^\bullet(X)^q$ and $H^n\mathcal{VR}^\bullet(Y)^q$ are $2\delta$-interleaved (Definition~\ref{Def:Interleaving}). By the Isometry Theorem relating the bottleneck distance and the interleaving distance (Theorem~\ref{Thm:isometria}), we conclude that 
\[
d_B\left(H^n\mathcal{VR}^\bullet(X)^q,H^n\mathcal{VR}^\bullet(Y)^q\right)\leq 2\,\delta=2\,\widehat{d}_{\mathfrak{N}}(X,Y)\,.
\]

\subsubsection{Proof of Theorem~\ref{Thm:EstabilidadPersistenciaGrosor}}\label{proof:Thm-EstabilidadPersistenciaGrosor}
Let $\delta$, $\phi$, and $\psi$ be as in the proof of Theorem~\ref{Thm:EstabilidadGruesos}. Following the notation from that proof, for any $s<t\in \mathbb{R}$, the commutativity of the diagrams in~\eqref{Eq:DiagramasVR_Interleaving} ensures that the following diagrams also commute:
\[
\begin{tikzcd}[row sep=4ex, column sep=2.75em]
    H^n\mathcal{VR}^s(X)^q 
        \arrow[rr, "H^n(i_s^t)"] 
        \arrow[dd, "\varphi^{n,s,q}_X"',<-] 
        \arrow[rd, "H^n(\psi_s^q)"] 
    & & 
    H^n\mathcal{VR}^t(X)^q
        \arrow[dd, "\varphi^{n,t,q}_X"' near start,<-] 
        \arrow[rd, "H^n(\psi_{t}^q)"] \\
    & 
    H^n\mathcal{VR}^{s+2\delta}(Y)^q 
        \arrow[rr, "H^n(i_{s+2\delta}^{t+2\delta})"' near start, crossing over] 
    & & 
    H^n\mathcal{VR}^{t+2\delta}(Y)^q 
        \arrow[dd, "\varphi^{n,t+2\delta,q}_Y", <-] \\
    H^n\mathcal{VR}^{s}(X)
        \arrow[rr, "H^n(i_{s}^t)" near start] 
        \arrow[rd, "H^n(\psi_{s})"] 
    & & 
    H^n\mathcal{VR}^{t}(X)
        \arrow[rd, "H^n(\psi_t)"] \\
    & 
    H^n\mathcal{VR}^{s+2\delta}(Y)
        \arrow[rr, "H^n(i_{s+2\delta}^{t+2\delta})", crossing over] 
        \arrow[from=uu, "\varphi^{n,s+2\delta,q}_Y" near start, crossing over, <-] 
    && 
    H^n\mathcal{VR}^{t+2\delta}(Y)
\end{tikzcd}
\]
\[
\begin{tikzcd}[row sep=4ex, column sep=2.75em]
    H^n\mathcal{VR}^{s-2\delta}(X)^q
        \arrow[rr, "H^n(i_{s-2\delta}^{s+2\delta})"'] 
        \arrow[dd, "\varphi^{n,s-2\delta,q}_X"',<-] 
        \arrow[rd, "H^n(\psi_{s-2\delta}^q)"'] 
    & & 
    H^n\mathcal{VR}^{s+2\delta}(X)^q 
        \arrow[dd, "\varphi^{n,s+2\delta,q}_X",<-]  \\
    & 
    H^n\mathcal{VR}^{s}(Y)^q\arrow[ur, "H^n(\phi_s^q)"'] \\
    H^n\mathcal{VR}^{s-2\delta}(X)
        \arrow[rr, "H^n(i_{s-2\delta}^{s+2\delta})" near start] 
        \arrow[rd, "H^n(\psi_{s-2\delta})"'] 
    & & 
    H^n\mathcal{VR}^{s+2\delta}(X) \\
    & 
    H^n\mathcal{VR}^{s}(Y) 
        \arrow[from=uu, "\varphi^{n,s,q}_Y" near start, crossing over,<-]
        \arrow[ur,"H^n(\phi_s)"'] 
\end{tikzcd}
\]
\[
\begin{tikzcd}[row sep=4ex, column sep=2.75em]
    H^n\mathcal{VR}^s(Y)^q
        \arrow[rr, "H^n(i_s^t)"] 
        \arrow[dd, "\varphi^{n,s,q}_Y"',<-] 
        \arrow[rd, "H^n(\phi_s^q)"] 
    & & 
    H^n\mathcal{VR}^t(Y)^q
        \arrow[dd, "\varphi^{n,t,q}_Y"' near start,<-] 
        \arrow[rd, "H^n(\phi_{t}^q)"] \\
    & 
    H^n\mathcal{VR}^{s+2\delta}(X)^q 
        \arrow[rr, "H^n(i_{s+2\delta}^{t+2\delta})"' near start, crossing over] 
    & & 
    H^n\mathcal{VR}^{t+2\delta}(X)^q 
        \arrow[dd, "\varphi^{n,t+2\delta,q}_X",<-] \\
    H^n\mathcal{VR}^{s}(Y)
        \arrow[rr, "H^n(i_{s}^t)" near start] 
        \arrow[rd, "H^n(\phi_{s})"] 
    & & 
    H^n\mathcal{VR}^{t}(Y)
        \arrow[rd, "H^n(\phi_t)"] \\
    & 
    H^n\mathcal{VR}^{s+2\delta}(X)
        \arrow[rr, "H^n(i_{s+2\delta}^{t+2\delta})", crossing over] 
        \arrow[from=uu, "\varphi^{n,s+2\delta,q}_X" near start, crossing over, <-] 
    && 
    H^n\mathcal{VR}^{t+2\delta}(X)
\end{tikzcd}
\]
\[
\begin{tikzcd}[row sep=4ex, column sep=2.75em]
    H^n\mathcal{VR}^{s-2\delta}(Y)^q 
        \arrow[rr, "H^n(i_{s-2\delta}^{s+2\delta})"] 
        \arrow[dd, "\varphi^{n,s-2\delta,q}_Y"',<-] 
        \arrow[rd, "H^n(\phi_{s-2\delta}^q)"'] 
    & & 
    H^n\mathcal{VR}^{s+2\delta}(Y)^q
        \arrow[dd, "\varphi^{n,s+2\delta,q}_Y",<-]  \\
    & 
    H^n\mathcal{VR}^{s}(X)^q\arrow[ur, "H^n(\psi_s^q)"'] \\
    H^n\mathcal{VR}^{s-2\delta}(Y) 
        \arrow[rr, "H^n(i_{s-2\delta}^{s+2\delta})" near start] 
        \arrow[rd, "H^n(\phi_{s-2\delta})"'] 
    & & 
    H^n\mathcal{VR}^{s+2\delta}(Y) \\
    & 
    H^n\mathcal{VR}^{s}(X) 
        \arrow[from=uu, "\varphi^{n,s,q}_X" near start, crossing over,<-]
        \arrow[ur,"H^n(\psi_s)"'] 
\end{tikzcd}
\]
From these diagrams, we obtain $2\delta$-interleaving morphisms between the persistence modules of the images $\img\varphi_X^{n,\bullet,q}$ and $\img\varphi^{n,\bullet,q}_Y$, the kernels $\ker\varphi^{n,\bullet,q}_X$ and $\ker\varphi^{n,\bullet,q}_Y$, and the cokernels $\coker\varphi^{n,\bullet,q}_X$ and $\coker\varphi^{n,\bullet,q}_Y$. Since the bottleneck distance and the interleaving distance coincide by the Isometry Theorem (Theorem~\ref{Thm:isometria}), the result follows.

\subsubsection{Proof of Theorem~\ref{Thm:EstabilidadCohesion}}\label{proof:Thm-EstabilidadCohesion}
    Following the notation in the proof of Theorem~\ref{Thm:EstabilidadGruesos}, the simplicial morphisms forming the diagrams in equation~\eqref{Eq:DiagramasVR_Interleaving} induce commutative diagrams between the corresponding face posets, where all induced maps are injective and order-preserving. Due to this injectivity, these maps restrict to order-preserving maps between the $\mathbf{h}$-face posets, making the following diagrams commute:
    \[
\begin{gathered}
    \begin{tikzcd}[column sep=6ex, row sep=4ex]
        P_{\mathcal{VR}^{s}(X)}^{\mathbf{h}} \arrow[r,"i_{s}^t",<-] \arrow[d,"\psi_s^{\mathbf{h}}",<-] 
        & P_{\mathcal{VR}^t(X)}^{\mathbf{h}} \arrow[d,"\psi_t^{\mathbf{h}}",<-] \\
        P_{\mathcal{VR}^{s+2\delta}(Y)}^{\mathbf{h}} \arrow[r, "i_{s+2\delta}^{t+2\delta}",<-] 
        & P_{\mathcal{VR}^{t+2\delta}(Y)}^{\mathbf{h}}
    \end{tikzcd}
    \qquad 
    \begin{tikzcd}[column sep=3ex, row sep=4ex]
        P_{\mathcal{VR}^{s-2\delta}(X)}^{\mathbf{h}} \arrow[rd, "\psi_{s-2\delta}^{\mathbf{h}}"',<-] \arrow[rr,"i_{s-2\delta}^{s+2\delta}",<-] 
        & & P_{\mathcal{VR}^{s+2\delta}(X)}^{\mathbf{h}} \\
        & P_{\mathcal{VR}^{s}(Y)}^{\mathbf{h}} \arrow[ru,"\phi_s^{\mathbf{h}}"',<-] &
    \end{tikzcd}
    \\[3ex] 
    \begin{tikzcd}[column sep=6ex, row sep=4ex]
        P_{\mathcal{VR}^{s}(Y)}^{\mathbf{h}} \arrow[r,"i_{s}^t",<-] \arrow[d,"\phi_s^{\mathbf{h}}",<-] 
        & P_{\mathcal{VR}^t(Y)}^{\mathbf{h}} \arrow[d,"\phi_t^{\mathbf{h}}",<-] \\
        P_{\mathcal{VR}^{s+2\delta}(X)}^{\mathbf{h}} \arrow[r, "i_{s+2\delta}^{t+2\delta}",<-] 
        & P_{\mathcal{VR}^{t+2\delta}(X)}^{\mathbf{h}}
    \end{tikzcd}
    \qquad 
    \begin{tikzcd}[column sep=3ex, row sep=4ex]
        P_{\mathcal{VR}^{s-2\delta}(Y)}^{\mathbf{h}} \arrow[rd, "\phi_{s-2\delta}^{\mathbf{h}}"',<-] \arrow[rr,"i_{s-2\delta}^{s+2\delta}",<-] 
        & & P_{\mathcal{VR}^{s+2\delta}(Y)}^{\mathbf{h}} \\
        & P_{\mathcal{VR}^{s}(X)}^{\mathbf{h}} \arrow[ru,"\psi_s^{\mathbf{h}}"',<-] &
    \end{tikzcd}
\end{gathered}
\]
    Taking poset cohomology with coefficients in $\Bbbk$ on the previous diagrams yields $2\delta$-interleaving morphisms between the persistence modules $H^nP_{\mathcal{VR}^\bullet(X)}^{\mathbf{h}}$ and $H^nP_{\mathcal{VR}^\bullet(Y)}^{\mathbf{h}}$. By the Isometry Theorem (Theorem~\ref{Thm:isometria}), we conclude that 
\[
d_{B}\big(H^n P_{\mathcal{VR}^\bullet(X)}^{\mathbf{h}},H^nP_{\mathcal{VR}^\bullet(Y)}^{\mathbf{h}}\big)\leq 2\,\widehat{d}_\mathfrak{N}(X,Y)\,.
\]

\subsubsection{Proof of Theorem~\ref{Thm:EstabilidadPersistenciaCohesion}}\label{proof:Thm_EstabilidadPersistenciaCohesion}
    The proof is analogous to that of Theorem~\ref{Thm:EstabilidadPersistenciaGrosor}, replacing the simplicial cohomology of the $q$-coskeleta with the poset cohomology of the $\mathbf{h}$-face posets. The required $2\delta$-interleaving morphisms arise from the commutativity of the diagrams induced on the face posets by equation~\eqref{Eq:DiagramasVR_Interleaving}, as explained in the proof of Theorem~\ref{Thm:EstabilidadCohesion}.
\end{document}